\newtheorem{theorem}{Theorem}[section]
\newtheorem{lemma}[theorem]{Lemma}
\newtheorem{corollary}[theorem]{Corollary}
\newtheorem{definition}[theorem]{Definition}
\newtheorem{remark}[theorem]{Remark}
\newtheorem*{acknowledgement*}{Acknowledgement}
\DeclareMathOperator{\Tf}{T}
\newcommand{\T}[1]{\Tf\hskip -0.25em{#1}}
          \newcommand{\df}[2]{\frac{d#1}{d#2}}
          \newcommand{\dto}[2]{\frac{d^2#1}{d#2^2}}
\begin{document}

\title{The fast track to Löwner's theorem}
\author{Frank Hansen}
\date{December 1st 2011\\
{\small First revision July 20 2012}
\\{\small Second revision November 15 2012}
\\{\small Third revision January 20 2013}}

\maketitle

\begin{abstract}

The operator monotone functions defined in the positive half-line are of particular importance. We give a version of the theory in which integral representations for these functions can be established directly without invoking Löwner's detailed analysis of matrix monotone functions of a fixed order or the theory of analytic functions.

We found a canonical relationship between positive and arbitrary operator monotone functions defined in the positive half-line, and this result effectively reduces the theory to the case of positive functions. 
\\[1ex]
{\bf MSC2010} classification: 26A48; 26A51; 47A63\\[1ex]
{\bf{Key words and phrases:}}  operator monotone function; integral representation; Löwner's theorem.

\end{abstract}

\section{Introduction  and preliminaries}

The functional calculus is defined by the spectral theorem. Since we only deal with matrices the function $ f(x) $ of a hermitian matrix $ x $ is defined for any function $ f $ defined on the spectrum of $ x. $

\begin{definition}
Let $I$ be an interval of any type. A function $ f\colon I \to\mathbf R $ is said to be \textit{n-matrix monotone} (or just \textit{n-monotone}) if
\[
x\le y\quad\Rightarrow\quad f(x)\leq f(y)
\]
for every pair of $ n\times n $ hermitian matrices $ x $ and $y$ with spectra in $I.$
\end{definition}

\begin{definition}
Let $I$ be an interval of any type.
A function $ f\colon I \to\mathbf R $ is said to be \textit{n-matrix convex} (or just \textit{n-convex}) if
\[
f(\lambda x+(1-\lambda)y)\le\lambda f(x)+(1-\lambda)f(y)
\]
for every $ \lambda\in[0,1] $ and every pair of $ n\times n $ hermitian matrices $ x $ and $y$ with spectra in $I$.
\end{definition}
Note that the spectrum of the matrix $ \lambda x+(1-\lambda)y $ in the definition automatically is contained in $ I. $ The functional calculus on the left-hand side is therefore well-defined.

We realise that a point-wise limit of $ n $-monotone ($ n $-convex) functions is $ n $-monotone ($ n $-convex).

\begin{definition}
A function $ f\colon I \to\mathbf R $ defined in an interval $ I $ is said to be operator monotone (operator convex) if it is $ n $-monotone ($ n $-convex) for all natural numbers $ n. $
\end{definition}

We realise that a point-wise limit of operator monotone (operator convex) functions is operator monotone (operator convex).

\subsection{Other proofs of Löwner's theorem}

Karl Löwner\footnote{Karel Löwner was a Czech known under his German name Karl Löwner. Fleeing the Nazis in 1939 he moved to the United States and changed his name to Charles Loewner.} \cite{kn:loewner:1934} analyses in great detail matrix monotone functions of a fixed order and then arrive at the characterisation of operator monotone functions by means of interpolation theory. 

Wigner and von Neumann gives in \cite{kn:wigner:1954} a new proof of Löwner's theorem based on continued fractions which is almost never cited.

Bendat and Sherman \cite{kn:bendat:1955} gives a new proof of  Löwner's theorem that relies on Löwner's detailed analysis of matrix monotone functions of a fixed order but combines it with the Hamburger moment problem. They also rely on Kraus \cite{kn:kraus:1936} to essentially prove that a function is operator convex if and only if the secant-slope function is operator monotone. 

Kor{\'a}nyi \cite{kn:koranyi:1956} gives a new proof of Löwner's theorem by using a variant of Löwner's characterisation of matrix monotone functions of a fixed order and spectral theory for unbounded self-adjoint operators.

The monograph of Donoghue \cite{kn:donoghue:1974} follows \cite{kn:loewner:1934}  closely but introduces some simplifications.

Sparr \cite{kn:sparr:1980} gives a new proof of Löwner's theorem that combines Löwner's characterisation of matrix monotone functions of a fixed order with the theory of interpolation spaces.

The paper \cite{kn:hansen:1982} by Pedersen and the author introduces the idea of first determining the extreme operator monotone functions and then obtain Löwner's theorem by applying Krein-Milman's theorem. The paper does not rely on Löwner's detailed analysis of matrix monotone functions but uses algebraic methods based on Jensen's operator inequality.

The proof in \cite{kn:hansen:1982} is used in a number of other sources including the book of Bhatia \cite{kn:bhatia:1997}.

Ameur \cite{kn:ameur:2003} combines the techniques of applying Jensen's operator inequality as in  \cite{kn:hansen:1982} with interpolation theory in the sense of Foiaş-Lions to obtain a new proof of Löwner's theorem.

    \section{Matrix monotonicity and matrix concavity}

    There is a striking connection between matrix monotonicity and matrix concavity for functions defined in an interval extending to plus infinity.

    \begin{theorem}\label{theorem: 2n-monotone function is n-concave}
    Let $ f:(0,\infty)\to{\mathbf R} $ be a $ 2n $-monotone function where $ n\ge 1. $ Then $ f $ is matrix concave of order $ n. $ In particular, $ f $ is continuous.
    \end{theorem}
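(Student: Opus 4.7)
I would derive the concavity inequality $f(\lambda x + (1-\lambda)y) \ge \lambda f(x) + (1-\lambda)f(y)$ for $n \times n$ Hermitian $x, y$ with spectra in $(0,\infty)$ and $\lambda \in (0,1)$ through a block-matrix dilation exploiting $2n$-monotonicity. The setup: form the $2n \times 2n$ positive matrix $A = \mathrm{diag}(x, y)$ and the isometry $V = (\sqrt{\lambda}\, I,\, \sqrt{1-\lambda}\, I)^{T}$, completed to a $2n \times 2n$ unitary $U = [V,\, V_\perp]$. A direct calculation shows that $U^{*} A U$ is a $2 \times 2$ block matrix whose $(1,1)$ block equals $z := \lambda x + (1-\lambda) y$, whose $(2,2)$ block equals $w := (1-\lambda) x + \lambda y$, and whose off-diagonal block is $\pm\sqrt{\lambda(1-\lambda)}\,(y - x)$; correspondingly, $f(U^{*} A U) = U^{*} f(A) U$ has $(1,1)$ block equal to $\lambda f(x) + (1-\lambda) f(y)$. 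Introducing the perturbed block-diagonal matrix $B_{s,r} := \mathrm{diag}(z + sI,\, w + rI)$, a Schur-complement computation yields $B_{s,r} \ge U^{*} A U$ if and only if $rs \ge \lambda(1-\lambda)\,\|y-x\|^{2}$, a condition achievable for every $s > 0$ by choosing $r$ large. Then $2n$-monotonicity gives $f(B_{s,r}) \ge f(U^{*} A U)$, and reading off the $(1,1)$ block produces the perturbed inequality
\[
 f(z + sI) \ge \lambda f(x) + (1-\lambda)\, f(y), \qquad s > 0. \quad (\ast)
\]

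The main obstacle is taking the limit $s \to 0^{+}$ in $(\ast)$, which requires $f$ to be right-continuous on the spectrum of $z$. To establish continuity of $f$ on $(0,\infty)$ I would specialize the construction to $n = 1$ and $\lambda = 1/2$, with scalars $a, b \in (0,\infty)$ playing the roles of $x, y$. The analogous \emph{lower} comparison $B''_{s,r} := \mathrm{diag}((a+b)/2 - s,\, (a+b)/2 - r) \le U^{*} A U$ holds under the same Schur bound $rs \ge (b-a)^{2}/4$, but now $B''_{s,r}$ must have spectrum in $(0,\infty)$, forcing $s, r < (a+b)/2$; compatibility of the two constraints (which uses $ab > 0$) requires $s \ge (b-a)^{2}/(2(a+b))$, and at this threshold $2n$-monotonicity yields $f(\xi_{a,b}) \le (f(a) + f(b))/2$ with $\xi_{a,b} := 2ab/(a+b) \in (0, (a+b)/2)$. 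Fixing $t_0 \in (0,\infty)$ and taking $a = t_0$, $b = t_0 + \epsilon$, one checks that $\xi_{a,b} \to t_0$ from above as $\epsilon \to 0^{+}$, so the inequality combined with the monotonicity bound $f(t_0) \le f(t_0^{+})$ forces $f(t_0^{+}) = f(t_0)$; the symmetric choice $a = t_0 - \epsilon$, $b = t_0$ applied to the upper comparison $(\ast)$ (with any $s = s(\epsilon) \in (0, \epsilon/2)$ tending to $0$) yields $f(t_0^{-}) = f(t_0)$.

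With $f$ continuous on $(0,\infty)$, continuous functional calculus gives $f(z + sI) \to f(z)$ as $s \to 0^{+}$, and $(\ast)$ passes to the limit to produce $f(z) \ge \lambda f(x) + (1-\lambda) f(y)$, which is precisely the $n$-concavity of $f$; the continuity clause of the theorem is then an immediate by-product of the scalar step.
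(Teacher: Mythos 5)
Your proposal is correct, and its core is the same as the paper's: the $2\times 2$ block dilation of $\mathrm{diag}(x,y)$ by the rotation-type unitary, the comparison with a perturbed block-diagonal matrix (your Schur-complement condition $rs\ge\lambda(1-\lambda)\|y-x\|^2$ is exactly the paper's choice of the lower-right entry $\ge\varepsilon^{-1}\|d\|^2$), and the resulting perturbed concavity inequality $(\ast)$, which is the paper's inequality (1). Where you genuinely diverge is the continuity step. The paper stays entirely with $(\ast)$: it passes to the right limit $f^+(t)=\lim_{\varepsilon\searrow 0}f(t+\varepsilon)$, shows that $f^+$ satisfies the unperturbed scalar concavity inequality and is therefore concave and hence continuous on the open half-line, and then squeezes $f^+(t-\varepsilon)\le f(t)\le f^+(t)$ to get $f=f^+$. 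You instead introduce a second, downward matrix comparison $B''_{s,r}\le U^*AU$, which is constrained by the requirement that the spectrum stay in $(0,\infty)$ and produces the upper bound $f(u)\le\tfrac12\bigl(f(a)+f(b)\bigr)$ for $u$ up to the harmonic mean $2ab/(a+b)$; since the harmonic mean of $t_0$ and $t_0+\epsilon$ lies strictly above $t_0$, this pins down right-continuity, and $(\ast)$ itself gives left-continuity. Both routes work; the paper's buys continuity for free from the classical fact that concave functions on open intervals are continuous, while yours is more hands-on but makes visible the role of the lower endpoint $0$ of the domain (the constraint $s,r<(a+b)/2$ is where positivity of $a,b$ enters). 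One cosmetic point: your ``threshold'' $s=(b-a)^2/(2(a+b))$ is not itself attainable (it would force $r=(a+b)/2$ and push an eigenvalue of $B''_{s,r}$ to $0$), so you only obtain the bound $f(u)\le\tfrac12\bigl(f(a)+f(b)\bigr)$ for $u$ strictly below the harmonic mean; this does not harm the argument, since the harmonic mean exceeds $t_0$ strictly, but the sentence should be phrased as a one-sided limit rather than an evaluation at $\xi_{a,b}$.
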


    \begin{proof}
	Let $ x_1,x_2 $ be positive definite matrices of order $ n $ and take
$ s\in[0,1]. $ We consider the unitary block matrix $ V $ of order $ 2n\times 2n $ given by
    \[
	V=\left(\begin{array}{cc}
	   	    s^{1/2}     & -(1-s)^{1/2}\\[0.5ex]
			(1-s)^{1/2} & s^{1/2}
	\end{array}\right)
	\]
and obtain by an elementary calculation that
\[
V^*\left(\begin{array}{cc}
			    x_1 & 0\\
			    0   & x_2
			    \end{array}\right)V
=\left(\begin{array}{cc}
                s x_1+(1-s)x_2  & s^{1/2}(1-s)^{1/2}(x_2-x_1)\\[0.5ex]
				s^{1/2}(1-s)^{1/2}(x_2-x_1) & (1-s)x_1+s x_2
					                              \end{array}\right).
\]
We set $ d=-s^{1/2}(1-s)^{1/2}(x_2-x_1) $ and notice that to a given $ \varepsilon>0 $ the difference
\[
\begin{array}{l}
	\left(\begin{array}{cc}
		    s x_1+(1-s)x_2+\varepsilon & 0\\[0.5ex]
		    0                           & 2\lambda
		    \end{array}\right)-
V^*\left(\begin{array}{cc}
			    x_1 & 0\\[0.5ex]
			    0   & x_2
			    \end{array}\right)V
\\[3ex]
\ge\left(\begin{array}{cc}
      \varepsilon  & d\\[0.5ex]
	  d            & \lambda
					                              \end{array}\right)
\qquad\qquad\text{for}\quad\displaystyle  \lambda\ge (1-s)x_1+s x_2.
\end{array}
\]
Since the last block matrix is positive semi-definite for $ \lambda\ge \varepsilon^{-1}\|d\|^2 $ we realize that
\[
	V^*\left(\begin{array}{cc}
			    x_1 & 0\\
			    0   & x_2
			    \end{array}\right)V\le\left(\begin{array}{cc}
		    s x_1+(1-s)x_2+\varepsilon & 0\\[0.5ex]
		    0                           & 2\lambda
		    \end{array}\right)
\]
for a sufficiently large $ \lambda>0. $ Since $ f $ is $ 2n $-monotone
we then obtain
\[
f\left(V^*\left(\begin{array}{cc}
				x_1 & 0\\[0.5ex]
				0   & x_2
				\end{array}\right)V\right)
\le\left(\begin{array}{cc}
f\left(sx_1+(1-s)x_2+\varepsilon\right) & 0\\[0.5ex]
0                              & f(2\lambda)
\end{array}\right)
\]
for such $ \lambda, $ and since
\[
\begin{array}{l}
f\left(V^*\left(\begin{array}{cc}
				x_1 & 0\\[0.5ex]
				0   & x_2
				\end{array}\right)V\right)
=V^*\left(\begin{array}{cc}
f(x_1) & 0\\[0.5ex]
0      & f(x_2)
\end{array}\right)V\\[3ex]
=\left(\begin{array}{cc}
        s f(x_1)+(1-s)f(x_2) & s^{1/2}(1-s)^{1/2}(f(x_2)-f(x_1))\\[0.5ex]
	    s^{1/2}(1-s)^{1/2}(f(x_2)-f(x_1)) & (1-s)f(x_1)+ s f(x_2)
	                 \end{array}\right)
\end{array}
\]
we realize that
\begin{equation}\label{epsilon concave function}
s f(x_1)+(1-s) f(x_2)\le f\left(s x_1+(1-s) x_2+\varepsilon\right).
\end{equation}
Since $ f $ is monotone the right limit $ f^+ $ defined by setting
\[
f^+(t)=\displaystyle\lim_{\varepsilon\searrow 0} f(t+\varepsilon)\qquad t>0
\]
is well-defined. For positive numbers $ t_1,t_2>0 $ we obtain
\[
\begin{array}{rl}
sf^+(t_1)+(1-s)f^+(t_2)&\le sf(t_1+\varepsilon)+(1-s)f(t_2+\varepsilon)\\[1ex]
&\le f\left(st_1+(1-s)t_2+2\varepsilon\right),
\end{array}
\]
where the first inequality follows from the definition of the right limit and the second follows from inequality (\ref{epsilon concave function}) by setting $ x_1=t_1+\varepsilon $ and $ x_2=t_2+\varepsilon. $
By letting $ \varepsilon $ tend to zero we then obtain
\[
sf^+(t_1)+(1-s)f^+(t_2)\le f^+\left(s t_1+(1-s)t_2\right),
\]
therefore $ f^+ $ is concave and thus continuous. Since $ f $ is monotone increasing we have
\[
f^+(t-\varepsilon)\le f(t)\le f^+(t)\qquad t>0,\, 0<\varepsilon<t,
\]
and since $ f^+ $ is continuous we obtain $ f=f^+ $ by letting $ \varepsilon $ tend to zero. Finally, since we established that $ f $ is continuous, we may let $ \varepsilon $ tend to zero in inequality (\ref{epsilon concave function}) to obtain
\[
sf(x_1)+(1-s)f(x_2)\le f\left(s x_1+(1-s)x_2\right),
\]
showing that $ f $ is $ n $-concave.
\end{proof}

The above theorem, with the added condition that $ f $ is continuous, was proved by Mathias \cite{kn:mathias:1991}. That a $ 4n $-monotone function defined in the positive half-line is $ n $-concave already  follows from \cite[proofs of 2.5.~Theorem and 2.1.~Theorem]{kn:hansen:1982}. The idea of the above proof is taken from \cite{kn:hansen:2003:1}.

\begin{corollary}\label{operator monotonicity implies operator concavity}
An operator monotone function $ f:(0,\infty)\to{\mathbf R} $ is automatically operator concave.
\end{corollary}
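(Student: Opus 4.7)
The plan is to derive the corollary immediately from Theorem \ref{theorem: 2n-monotone function is n-concave}. Since operator monotonicity means $n$-monotonicity for every positive integer $n$, an operator monotone function $f:(0,\infty)\to\mathbf{R}$ is in particular $2n$-monotone for every $n\ge 1$.

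Applying the theorem with this observation, I would conclude that $f$ is $n$-concave for every $n\ge 1$. By the definition of operator concavity as $n$-concavity for all natural numbers $n$, this is exactly the statement that $f$ is operator concave.

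There is essentially no obstacle here; the work was already done in the proof of Theorem \ref{theorem: 2n-monotone function is n-concave}, which handles a single value of $n$ via the unitary block matrix trick combined with a passage through an $\varepsilon$-relaxed concavity inequality. The corollary is a one-line quantifier rearrangement: from ``for every $n$, $f$ is $2n$-monotone, hence $n$-concave'' to ``$f$ is $n$-concave for every $n$.''
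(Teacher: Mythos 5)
Your proposal is correct and is exactly the deduction the paper intends: the corollary is stated without proof as an immediate consequence of Theorem~\ref{theorem: 2n-monotone function is n-concave}, obtained by applying that theorem for every $n\ge 1$. Nothing further is needed.
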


It is essential for the above result that the function is defined in an interval stretching out to infinity. Without this assumption there are easy counter examples.

\begin{theorem}

Let $ f:(0,\infty)\to\mathbf R $ be a non-negative function which is $ n $-concave for some $ n\ge 1. $ Then $ f  $ is also $ n $-monotone.
\end{theorem}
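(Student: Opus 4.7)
The strategy is to imitate the classical scalar argument that a non-negative concave function on $(0,\infty)$ is automatically monotone increasing: otherwise concavity extrapolation would eventually force $f$ below zero. To adapt this to matrices, given positive definite $n\times n$ matrices $x\le y$, I will express $y$ as a convex combination of $x$ and a ``large'' positive matrix $z$, apply $n$-concavity of $f$, and then use $f\ge 0$ to discard the contribution from $z$.

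Concretely, for each $\lambda\in[0,1)$ set
\[
z_\lambda=(1-\lambda)^{-1}(y-\lambda x).
\]
The elementary identity $y-\lambda x=(y-x)+(1-\lambda)x\ge(1-\lambda)x>0$ shows that $z_\lambda\ge x>0$ is positive definite, so its spectrum lies in $(0,\infty)$ and $f(z_\lambda)$ is well defined. By construction $y=\lambda x+(1-\lambda)z_\lambda$ is a convex combination of positive definite matrices, so $n$-concavity yields
\[
f(y)\ge \lambda f(x)+(1-\lambda)f(z_\lambda)\ge \lambda f(x),
\]
where the second inequality uses $f(z_\lambda)\ge 0$. This bound holds for every $\lambda\in[0,1)$, and since $f(x)\ge 0$ the right-hand side tends to $f(x)$ as $\lambda\nearrow 1$; hence $f(y)\ge f(x)$, which is precisely $n$-monotonicity.

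There is no real obstacle to this plan. The one point to verify carefully is that $z_\lambda$ stays in the domain as $\lambda\to 1$, which is immediate from $y\ge x$ and $x>0$; notice that $z_\lambda$ itself blows up in the direction of $y-x$, but that is harmless because it is weighted by the vanishing factor $(1-\lambda)$ and $f(z_\lambda)$ is discarded using non-negativity. In particular, no continuity hypothesis on $f$ is required: the conclusion is obtained by passing to the supremum over $\lambda$ in a family of honest inequalities, and $f\ge 0$ is exactly what makes that supremum equal to $f(x)$.
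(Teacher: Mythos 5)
Your proof is correct, and it follows the same basic strategy as the paper (write a suitable point as a convex combination of $x$ and a matrix in the direction of $y-x$, apply $n$-concavity, and discard the second term using $f\ge 0$), but with a different and in fact slicker decomposition. The paper decomposes $\lambda y=\lambda x+(1-\lambda)\bigl(\lambda(1-\lambda)^{-1}(y-x)\bigr)$, which yields $f(\lambda y)\ge\lambda f(x)$ and therefore requires continuity of $f$ (automatic for a concave function on an open interval, but still an extra ingredient) to pass to the limit $f(\lambda y)\to f(y)$; it also needs a separate reduction via $\mu x<x\le y$ to handle the non-strict case $x\le y$. You instead decompose $y$ itself as $y=\lambda x+(1-\lambda)z_\lambda$ with $z_\lambda=(1-\lambda)^{-1}(y-\lambda x)\ge x>0$, so the left-hand side of the concavity inequality is exactly $f(y)$ and the only limit taken is $\lambda f(x)\to f(x)$, which is trivial by closedness of the positive semi-definite cone; no continuity of $f$ and no strict-inequality reduction are needed. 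The one cosmetic quibble is your closing remark that $f\ge 0$ is what makes the supremum equal $f(x)$: the limit $\lambda f(x)\to f(x)$ holds regardless, and the role of $f\ge 0$ is only to justify discarding $(1-\lambda)f(z_\lambda)$. Your variant is a genuine, if modest, simplification of the paper's argument.
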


\begin{proof}
Let $ x $ and $ y $ be positive definite $ n\times n $ matrices with $ x< y $ and take $ \lambda $ in the open interval $ (0,1). $ We may write
\[
\lambda y=\lambda x +(1-\lambda )\bigl(\lambda(1-\lambda)^{-1}(y-x)\bigr)
\]
as a convex combination of two positive definite matrices. Since $ f $ is $ n $-concave we thus obtain
\[
f(\lambda y)\ge\lambda f(x)+(1-\lambda)f(\lambda(1-\lambda)^{-1}(y-x))\ge \lambda f(x),
\]
where we used that $ f $ is non-negative. Since $ f $ is continuous we obtain $ f(x)\le f(y) $ be letting $ \lambda\to 1. $ In the general case, where just $ x\le y, $ we have
\[
\mu x<x\le y\quad\text{for }\,  0<\mu<1, 
\]
since $ x $ is positive definite, and then obtain $ f(\mu x)\le f(y). $ The assertion now follows by letting $ \mu\to 1. $
\end{proof}

The above proof is taken from \cite[2.5.~Theorem]{kn:hansen:1982}.

\begin{corollary}\label{monotonicity and concavity}
A function mapping the positive half-line into itself is operator monotone if and only if it is operator concave.
\end{corollary}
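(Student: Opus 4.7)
The plan is to observe that the corollary is an immediate consequence of the two results just established, with the hypothesis that $f$ maps the positive half-line into itself playing the dual role of (i) placing us in the setting of functions on $(0,\infty)$ required by both results, and (ii) automatically ensuring non-negativity, which is needed to invoke the preceding theorem.

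For the forward direction, assume $f\colon(0,\infty)\to(0,\infty)$ is operator monotone. Then Corollary \ref{operator monotonicity implies operator concavity} applies directly and yields that $f$ is operator concave. No further work is needed.

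For the converse, assume $f\colon(0,\infty)\to(0,\infty)$ is operator concave, i.e.\ $n$-concave for every $n\ge 1$. Since the range of $f$ lies in $(0,\infty)$, the function $f$ is in particular non-negative, so the hypotheses of the theorem preceding the corollary are satisfied for every $n$. Applying that theorem for each $n\ge 1$, we conclude that $f$ is $n$-monotone for every $n$, and hence operator monotone.

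There is essentially no obstacle here; both implications reduce to citing the two results proved just above, and the only point worth flagging is the trivial but essential remark that the assumption ``mapping into the positive half-line'' is precisely what supplies the non-negativity needed for the concave-implies-monotone direction. Accordingly, the written proof should be a two-line verification, one sentence per direction.
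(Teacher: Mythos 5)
Your proof is correct and follows exactly the route the paper intends: the corollary is an immediate consequence of Corollary~\ref{operator monotonicity implies operator concavity} (for the forward direction) and of the preceding theorem that a non-negative $n$-concave function on $(0,\infty)$ is $n$-monotone, applied for every $n$ (for the converse). The paper leaves this as an unstated deduction, and your observation that positivity of the range supplies the non-negativity hypothesis is the only point that needs to be made.
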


\subsection{Regularization}

The following regularization procedure is standard, cf. for example
    \cite[Page 11]{kn:donoghue:1974}.
    Let $ \varphi $ be a positive and even $ C^{\infty} $-function defined in the real line, vanishing
    outside the closed interval $ [-1,1] $ and normalized such that
    \[
    \int_{-1}^1 \varphi(x)\,dx=1.
    \]
    For any locally integrable function $ f $ defined in an open interval $ (a,b), $ where possibly $ b=\infty, $ we form, for small $ \varepsilon>0, $ its regularization,
    \[
    f_\varepsilon(t)=\frac{1}{\varepsilon}\int_a^b \varphi\left(\frac{t-s}{\varepsilon}\right)f(s)\,ds\qquad t\in(a+\varepsilon,b-\varepsilon), 
    \]
and realize that it is infinitely many times differentiable. We may also write
    \[
    f_\varepsilon(t)=\int_{-1}^1\varphi(s)f(t-\varepsilon s)\,ds\qquad t\in(a+\varepsilon,b-\varepsilon).
    \]
    If $ f $ is continuous, then $ f_\epsilon $ is eventually well-defined and converges uniformly towards $ f $ on any compact subinterval of $ (a,b). $ In particular, for each $ t\in(a,b), $ the net $ f_\varepsilon(t) $ is well-defined for sufficiently small $ \varepsilon $ and converges to $ f(t) $ as $ \varepsilon $ tends to zero.

    Suppose now that $ f $ is $ n $-monotone in $ (0,\infty) $ for $ n\ge 2. $ We notice that $ f $ is continuous by Theorem~\ref{theorem: 2n-monotone function is n-concave}. It follows from the last integral representation that $ f_\epsilon $ is $ n $-monotone in the interval $ (\varepsilon,\infty) $ for $ \varepsilon>0. $ We realise that the restriction of $ f $ to any compact interval $ J $ in $ (0,\infty) $ is the uniform limit of a sequence of $ n $-monotone functions that are infinitely many times differentiable in a neighbourhood of $ J. $

    A similar statement is obtained for $ n $-convex functions defined in an open interval $ (a,b).  $ Notice that in this case the continuity  is immediate.

\section{Bendat and Sherman's theorem}

For a differentiable function $ f\colon I\to\mathbf R $ the (first) divided difference $ [t, s]_f $ for $ t,s\in I $ is defined by
\[
	[t, s]_f=\left\{
	\begin{array}{ll}\displaystyle
	\frac{f(t)-f(s)}{t-s}\quad &t\neq s\\[2ex]
	f'(t)& t=s,
	\end{array}\right.
\]
and the Löwner matrix $ L(\lambda_1,\dots,\lambda_n) $ is defined by setting
\[
L(\lambda_1,\dots,\lambda_n)=\Big([\lambda_i, \lambda_j]_f\Big)_{i,j=1}^n
\]
for $ \lambda_1,\dots,\lambda_n\in I. $ Notice that a Löwner matrix is linear in the function $ f. $

If $ f $ is twice continuously differentiable the second divided difference $ [t,s,r]_f $ for distinct numbers $ s,t,r\in I $ is defined by setting
\[
[t,s,r]_f=\frac{[t,s]_f-[s,r]_f}{t-r}
\]
and the definition is then extended by continuity to arbitrary numbers $ t,s,r\in I. $ Notice that in this way $ [t,t,t]_f=f''(t)/2. $

Divided differences are symmetric in the entries.

\begin{lemma}
Let $ f $ be a real function in $ C^1(I), $ where $ I $ is an open interval, and let $ x $ be an $ n\times n $ diagonal matrix with diagonal elements $ \lambda_1,\dots,\lambda_n\in I. $ The function $ t\to f(x+th) $ is  defined in a neighbourhood of zero for any hermitian $ n\times n $ matrix $ h=(h_{i,j})_{i,j=1}^n $ and
\[
\df{}{t} \bigl(f(x+th)\xi\mid\xi\bigr)\Big|_{t=0}=(h\circ L(\lambda_1,\dots,\lambda_n)\xi\mid\xi)\qquad\xi\in\mathbf C^n,
\]
where $ h\circ L(\lambda_1,\dots,\lambda_n) $ denotes the Hadamard (entry-wise) product of $ h $ and $ L(\lambda_1,\dots,\lambda_n). $
\end{lemma}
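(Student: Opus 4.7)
The plan is to establish the identity first for polynomial $f$ by direct expansion, then extend it to arbitrary $f\in C^1(I)$ by uniform polynomial approximation.

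First I treat the monomials $f(s)=s^k$. Expanding $(x+th)^k$ as an ordered product and isolating the coefficient of $t$ yields
\[
\df{}{t}(x+th)^k\Big|_{t=0}=\sum_{l=0}^{k-1} x^l h x^{k-1-l}.
\]
Since $x$ is diagonal with entries $\lambda_1,\dots,\lambda_n$, the $(i,j)$-entry of $x^l h x^{k-1-l}$ is simply $\lambda_i^l\lambda_j^{k-1-l}h_{ij}$. Summing in $l$ produces
\[
h_{ij}\sum_{l=0}^{k-1}\lambda_i^l\lambda_j^{k-1-l}=h_{ij}[\lambda_i,\lambda_j]_f,
\]
by the geometric series when $\lambda_i\ne\lambda_j$ and by the identity $k\lambda_i^{k-1}=f'(\lambda_i)$ when $\lambda_i=\lambda_j$. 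Linearity in $f$ then gives the lemma for every polynomial.

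For a general $f\in C^1(I)$ I fix a compact subinterval $K\subset I$ whose interior contains $\lambda_1,\dots,\lambda_n$ and, by Weierstrass applied to $f'$, choose polynomials $p_m$ with $p_m\to f$ and $p_m'\to f'$ uniformly on $K$. The polynomial case gives
\[
\df{}{t}\bigl(p_m(x+th)\xi\mid\xi\bigr)\Big|_{t=0}=\bigl(h\circ L_{p_m}(\lambda_1,\dots,\lambda_n)\xi\mid\xi\bigr).
\]
Each entry of $L_{p_m}(\lambda_1,\dots,\lambda_n)$ converges to the corresponding entry of $L_f(\lambda_1,\dots,\lambda_n)$: off-diagonal entries from the pointwise convergence $p_m\to f$, diagonal entries from the uniform convergence $p_m'\to f'$. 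Hence the right-hand side converges to $\bigl(h\circ L_f(\lambda_1,\dots,\lambda_n)\xi\mid\xi\bigr)$. Choosing $\delta>0$ so that $\sigma(x+th)\subset K$ for $|t|<\delta$, the spectral theorem yields $p_m(x+th)\to f(x+th)$ uniformly in such $t$. Paired with uniform-in-$t$ convergence of the corresponding derivatives, the classical theorem on termwise differentiation of a uniformly convergent sequence finishes the proof at $t=0$.

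The one delicate point is precisely this uniform convergence of $\df{}{t}\bigl(p_m(x+th)\xi\mid\xi\bigr)$ in $t$ on a neighbourhood of $0$, not just at $t=0$. At a non-diagonal matrix $y=x+th$ the polynomial derivative formula must be rewritten in an eigenbasis of $y$, and one then needs the matrix-Lipschitz estimate that a Löwner matrix assembled from $p_m-p_n$ has operator norm controlled by $\|p_m'-p_n'\|_{L^\infty(K)}$, reflecting the mean-value interpretation of divided differences for $C^1$ functions. This uniform-in-$f$ control of the derivative is the real analytic content bridging the polynomial calculation and the general $C^1$ case.
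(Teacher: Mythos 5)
Your proof is correct and follows essentially the same route as the paper: the monomial computation $\df{}{t}(x+th)^k\big|_{t=0}=\sum_{l}x^{l}hx^{k-1-l}$, identification of the entries with divided differences, linearity to polynomials, and then polynomial approximation. The only difference is that you spell out the approximation step (uniform convergence $p_m'\to f'$, the termwise-differentiation theorem, and the divided-difference estimate $\bigl|[\mu_i,\mu_j]_{p_m-p_{m'}}\bigr|\le\|p_m'-p_{m'}'\|_{L^\infty(K)}$ via the mean value theorem), which the paper compresses into the single phrase ``the general case then follows by approximation''; your version of that step is sound.
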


\begin{proof}
We first prove the lemma for a monomial $ f(t)=t^m, $ where $ m\ge 1 $ is an integer. The first divided difference
\[
\begin{array}{l}
\displaystyle [\lambda_i,\lambda_j]_f=\frac{\lambda_i^m-\lambda_j^m}{\lambda_i-\lambda_j}
=\lambda_i^{m-1}+\lambda_i^{m-2}\lambda_j+\cdots+\lambda_i\lambda_j^{m-2}+\lambda_j^{m-1}\\[3ex]
=\displaystyle\sum_{a+b=m-1}\lambda_i^a\lambda_j^b
\end{array}
\]
and this holds also for $ \lambda_i=\lambda_j. $ Therefore,
\[
\begin{array}{l}
\displaystyle (h\circ L(\lambda_1,\dots,\lambda_n)\xi\mid\xi)=\sum_{i=1}^n \bigl(h\circ L(\lambda_1,\dots,\lambda_n)\xi\bigr)_i\bar\xi_i\\[2ex]
=\displaystyle\sum_{i,j=1}^n h_{i,j} \sum_{a+b=m-1}\lambda_i^a\lambda_j^b\xi_j\bar\xi_i
=\sum_{a+b=m-1} (x^a h x^b\xi\mid\xi)
\end{array}
\]
which is the first order term i $ t $ of $ \bigl((x+th)^m\xi\mid\xi\bigr). $ By using linearity the statement of the lemma follows for arbitrary polynomials. The general case then follows by approximation.
\end{proof}

\begin{theorem}\label{characterization in terms of Loewner matrices}
Let $ f $ be a real function in $ C^1(I), $ where $ I $ is an open interval and take a natural number $ n\ge 1. $ Then $ f $ is $ n $-monotone if and only if the Löwner matrix $ L(\lambda_1,\dots,\lambda_n) $ is positive semi-definite for all sequences $ \lambda_1,\dots,\lambda_n\in I. $
\end{theorem}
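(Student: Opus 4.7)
The plan is to use the preceding lemma in both directions: $n$-monotonicity is equivalent to positivity of the directional derivative of $f$ at a diagonal matrix $x$ along a positive semi-definite direction $h$, and the lemma identifies this derivative with the quadratic form of $h \circ L$.

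For the forward implication, assume $f$ is $n$-monotone and fix $\lambda_1,\dots,\lambda_n \in I$. Set $x = \mathrm{diag}(\lambda_1,\dots,\lambda_n)$, let $e = (1,\dots,1)^T \in \mathbf{C}^n$, and take $h = ee^\ast$, the all-ones matrix, which is positive semi-definite of rank one. Since $I$ is open, $x + th$ still has spectrum in $I$ for small $t > 0$, and $x \le x + th$, so $f(x) \le f(x+th)$. For any $\xi \in \mathbf{C}^n$ this forces $(f(x+th)\xi \mid \xi) \ge (f(x)\xi \mid \xi)$; dividing by $t>0$ and letting $t \searrow 0$ shows the one-sided derivative is non-negative. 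The lemma identifies this derivative with $(h \circ L(\lambda_1,\dots,\lambda_n)\xi \mid \xi)$, and since every entry of $h$ equals one we have $h \circ L = L$. Hence $(L\xi \mid \xi) \ge 0$ for every $\xi$, i.e.\ $L(\lambda_1,\dots,\lambda_n) \ge 0$.

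For the converse, assume every Löwner matrix of $f$ is positive semi-definite, let $x \le y$ be hermitian $n \times n$ matrices with spectra in $I$, and consider the path $x(t) = (1-t)x + ty$ for $t \in [0,1]$. Since $x \le x(t) \le y$, the eigenvalues of $x(t)$ lie in $[\lambda_{\min}(x),\lambda_{\max}(y)] \subset I$, so the functional calculus is well defined along the path. Fix $\xi \in \mathbf{C}^n$ and set $g(t) = (f(x(t))\xi \mid \xi)$. At any $t_0 \in [0,1]$, diagonalise $x(t_0) = u\Lambda u^\ast$ with $\Lambda = \mathrm{diag}(\mu_1,\dots,\mu_n)$, observe that $x(t_0 + s) = u(\Lambda + s\tilde h)u^\ast$ with $\tilde h = u^\ast(y-x)u \ge 0$, and apply the lemma with the auxiliary vector $\tilde\xi = u^\ast\xi$ to obtain
\[
g'(t_0) = \bigl(\tilde h \circ L(\mu_1,\dots,\mu_n)\,\tilde\xi \mid \tilde\xi\bigr).
\]
The Schur product theorem---that the Hadamard product of two positive semi-definite matrices is positive semi-definite---then gives $g'(t_0) \ge 0$. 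Hence $g$ is non-decreasing on $[0,1]$, so $(f(x)\xi \mid \xi) = g(0) \le g(1) = (f(y)\xi \mid \xi)$ for every $\xi$, yielding $f(x) \le f(y)$.

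The main ingredient in the converse is the Schur product theorem, which propagates the hypothesised positivity of $L$ through the Hadamard product with the positive semi-definite direction $\tilde h$. The remaining points---verifying that $x(t)$ keeps its spectrum in $I$, and passing from pointwise non-negative derivative to monotonicity of $g$---are standard.
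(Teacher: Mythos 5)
Your proposal is correct and follows essentially the same route as the paper: the all-ones matrix $h=ee^\ast$ reduces the forward direction to the lemma, and the Schur product theorem handles the converse. The only difference is that you spell out the reduction of $n$-monotonicity to non-negativity of the derivative along the segment from $x$ to $y$ (including the diagonalisation at each point of the path), which the paper dispatches with the phrase ``it follows from classical analysis''; your version is a welcome filling-in of that step rather than a different argument.
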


\begin{proof}
It follows from classical analysis that $ f $ is $ n $-monotone if and only if 
\[
\df{}{t} \bigl(f(x+th)\xi\mid\xi\bigr)\Big|_{t=0}\ge 0
\]
for every hermitian $ x $ with spectrum in $ I, $ every positive semi-definite matrix $ h, $ and every $ \xi\in\mathbf C^n. $ We may now choose $ x $ as a diagonal matrix with diagonal elements $ \lambda_1,\dots,\lambda_n\in I. $ By choosing $ h $ as the positive semi-definite matrix with $ h_{i,j}=1 $ for $ i,j=1,\dots,n $ we realise that $ L(\lambda_1,\dots,\lambda_n)\ge 0 $ if $ f $ is $ n $-monotone. Since the Hadamard product of two semi-definite matrices is positive semi-definite (indeed, it is a principal submatrix of the tensor product), we realise that $ f $ is $ n $-monotone if all the Löwner matrices $  L(\lambda_1,\dots,\lambda_n)\ge 0 $ for arbitrary $ \lambda_1,\dots,\lambda_n\in I. $ 
\end{proof}

\begin{definition}
Take a function  $ f\in C^2(I), $ where $ I $ is an open interval and (not necessarily distinct) numbers $ \lambda_1,\dots,\lambda_n\in I. $ The associated Kraus~\cite{kn:kraus:1936, kn:hansen:1997:2} matrices $ H(1),\dots,H(n) $ are defined by setting
\[
H(p)=2\Big([\lambda_p,\lambda_i,\lambda_j]_f\Bigr)_{i,j=1}^n
\]
for $ p=1,\dots,n. $

\end{definition}
Notice that a Kraus matrix is linear in the function $ f. $

\begin{lemma}
Let $ f $ be a real function in $ C^2(I), $ where $ I $ is an open interval and let $ x $ be an $ n\times n $ diagonal matrix with diagonal elements $ \lambda_1,\dots,\lambda_n\in I. $ The function $ t\to f(x+th) $ is defined in a neighbourhood of zero for any hermitian $ n\times n $ matrix $ h=(h_{i,j})_{i,j=1}^n $  and
\[
\dto{}{t} \bigl(f(x+th)\xi\mid\xi\bigr)\Big|_{t=0}=\sum_{p=1}^n \bigl(H(p) \eta(p)\mid \eta(p)\bigr),
\]
where
\begin{enumerate}[(i)]

\item $ \xi=(\xi_1,\dots,\xi_n) $ is a vector in $ \mathbf C^n. $

\item $ H(1),\dots,H(n) $ are the Kraus matrices associated with $ f $ and $ \lambda_1,\dots,\lambda_n. $ 

\item $ \eta(p)=\bigl(\xi_1 h_{p,1},\dots,\xi_n h_{p,n}\bigr) $ for $ p=1,\dots,n. $

\end{enumerate}
\end{lemma}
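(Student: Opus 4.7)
I would mirror the strategy of the preceding lemma: verify the formula on monomials, extend to polynomials by linearity, and conclude for general $C^2$ functions by approximation.

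For a monomial $f(t)=t^m$ with $m\ge 2$, expanding $(x+th)^m$ as a non-commutative sum shows that the coefficient of $t^2$ is $\sum_{a+b+c=m-2} x^{a}hx^{b}hx^{c}$, so
\[
\dto{}{t}\bigl((x+th)^m\xi\mid\xi\bigr)\Big|_{t=0}=2\sum_{a+b+c=m-2}\bigl(x^{a}hx^{b}hx^{c}\xi\mid\xi\bigr).
\]
On the other hand, the identity $[\lambda_p,\lambda_i,\lambda_j]_{t^m}=\sum_{a+b+c=m-2}\lambda_p^{a}\lambda_i^{b}\lambda_j^{c}$ (extending the identity used in the previous lemma by one more step of the recursion $[t,s,r]_f=([t,s]_f-[s,r]_f)/(t-r)$ on monomials) gives
\[
H(p)_{i,j}=2\sum_{a+b+c=m-2}\lambda_p^{a}\lambda_i^{b}\lambda_j^{c}.
\]

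Now I would compute $\sum_{p}\bigl(H(p)\eta(p)\mid\eta(p)\bigr)$ directly from the definitions. Writing out $\eta(p)_j=\xi_j h_{p,j}$ and $\overline{\eta(p)_i}=\bar\xi_i\overline{h_{p,i}}=\bar\xi_i h_{i,p}$ (here the Hermiticity of $h$ is essential), the sum becomes
\[
2\sum_{a+b+c=m-2}\sum_{i,j,p}\bar\xi_i\,\lambda_i^{b}\,h_{i,p}\,\lambda_p^{a}\,h_{p,j}\,\lambda_j^{c}\,\xi_j,
\]
which, since $x$ is diagonal, is exactly $2\sum_{a+b+c=m-2}(x^{b}hx^{a}hx^{c}\xi\mid\xi)$. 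Because we are summing over all ordered triples with $a+b+c=m-2$, relabelling shows this equals the expression obtained above for the second derivative. Linearity in $f$ of both sides — the left via linearity of the functional calculus on polynomials, the right via linearity of second divided differences, hence of the Kraus matrices — extends the identity to arbitrary real polynomials.

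For the final step, I would approximate $f\in C^2(I)$ by polynomials $p_k$ on a compact subinterval of $I$ containing $\lambda_1,\dots,\lambda_n$ such that $p_k\to f$ and $p_k''\to f''$ uniformly; using the standard integral representation of the second divided difference over a 2-simplex, this forces $H_{p_k}(p)\to H_f(p)$ entry-wise. Simultaneously, the second derivative of $(p_k(x+th)\xi\mid\xi)$ at $t=0$ converges to that of $(f(x+th)\xi\mid\xi)$ by the continuity of the $C^2$ functional calculus on Hermitian matrices with spectrum in a fixed compact set. Passing to the limit gives the desired identity. The only real subtlety is the polynomial identity for the monomial case, the combinatorial bookkeeping of which should be carried out carefully; beyond that, the argument is essentially parallel to the first-order lemma.
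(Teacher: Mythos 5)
Your proposal is correct and follows essentially the same route as the paper: verify the identity for monomials via the formula $[\lambda_p,\lambda_i,\lambda_j]_{t^m}=\sum_{a+b+c=m-2}\lambda_p^a\lambda_i^b\lambda_j^c$, match the resulting sum $2\sum_{a+b+c=m-2}(x^bhx^ahx^c\xi\mid\xi)$ with the second-order term of $\bigl((x+th)^m\xi\mid\xi\bigr)$, then extend by linearity and approximation. Your explicit remarks on the relabelling of the index triples and on the convergence of the Kraus matrices under polynomial approximation are details the paper leaves implicit, but the argument is the same.
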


\begin{proof}
We first prove the lemma for a monomial $ f(t)=t^m, $ where $ m\ge 2 $ is an integer. Since
\[
\begin{array}{l}
\displaystyle [\lambda_p,\lambda_i]_f-[\lambda_i,\lambda_j]_f\\[2ex]
\displaystyle=\lambda_p^{m-1}+\lambda_p^{m-2}\lambda_i+\cdots+\lambda_p\lambda_i^{m-2}+\lambda_i^{m-1}\\[1ex]
\hskip 10em-\bigl(\lambda_i^{m-1}+\lambda_i^{m-2}\lambda_j+\cdots+\lambda_i\lambda_j^{m-2}+\lambda_j^{m-1}\bigr)\\[2ex]
=\lambda_i^{m-2}(\lambda_p-\lambda_j)+\lambda_i^{m-3}(\lambda_p^2-\lambda_j^2)+\cdots+
\lambda_i(\lambda_p^{m-2}-\lambda_j^{m-2})+\lambda_p^{m-1}-\lambda_j^{m-1}
\end{array}
\]
the second divided difference
\[
\begin{array}{l}
\displaystyle[\lambda_p,\lambda_i,\lambda_j]_f=\frac{[\lambda_p,\lambda_i]_f-[\lambda_i,\lambda_j]_f}{\lambda_p-\lambda_j}\\[2ex]
=\lambda_i^{m-2}+\lambda_i^{m-3}(\lambda_p+\lambda_j)+\lambda_i^{m-4}(\lambda_p^2+\lambda_p\lambda_j+\lambda_j^2)+\cdots\\[1ex]
\hskip 8em +\,\lambda_i(\lambda_p^{m-3}+\lambda_p^{m-4}\lambda_j+\cdots+\lambda_p\lambda_j^{m-4}+\lambda_j^{m-3})\\[1ex]
\hskip 10em+\,(\lambda_p^{m-2}+\lambda_p^{m-3}\lambda_j+\cdots+\lambda_p\lambda_j^{m-3}+\lambda_j^{m-2})\\[2ex]
=\displaystyle\sum_{a+b+c=m-2}\lambda_p^a\lambda_i^b\lambda_j^c\,,
\end{array}
\]
where summation limits should be properly interpreted, and the case $ \lambda_p=\lambda_j $ is handled separately. We then obtain
\[
\begin{array}{l}
\displaystyle\sum_{p=1}^n \big(H(p)\eta(p)\mid\eta(p)\bigr)=\sum_{p,i,j=1}^n 2[\lambda_p,\lambda_i,\lambda_j]_f \xi_j h_{p,j} \bar\xi_i \bar h_{p,i}\\[2ex]
=\displaystyle 2\sum_{i,j,p=1}^n\sum_{a+b+c=m-2}\lambda_p^a\lambda_i^b\lambda_j^c \xi_j h_{p,j}\bar\xi_i \bar h_{p,i}=
2\sum_{a+b+c=m-2} (x^b h x^a h x^c\xi\mid\xi)
\end{array}
\]
which is the second order term in $ t $ of $ \bigl((x+th)^{m}\xi\mid\xi\bigr). $ By using linearity the statement of the lemma follows for arbitrary polynomials. The general case then follows by approximation.
\end{proof}

\begin{theorem}\label{convexity in terms of Kraus matrices}
Let $ f $ be a real function in $ C^2(I), $ where $ I $ is an open interval. Then $ f $ is $ n $-convex if and only if the Kraus matrices associated with $ f $ and any choice of $ \lambda_1,\dots,\lambda_n\in I $ are positive semi-definite.
\end{theorem}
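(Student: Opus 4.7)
My plan will closely mirror the proof of Theorem~\ref{characterization in terms of Loewner matrices}. First I would invoke the standard calculus reformulation: for $f\in C^2(I)$, $n$-convexity is equivalent to
\[
\dto{}{t}\bigl(f(x+th)\xi\mid\xi\bigr)\Big|_{t=0}\ge 0
\]
for every hermitian $n\times n$ matrix $x$ with spectrum in $I$, every hermitian $h$, and every $\xi\in\mathbf C^n$, because $g(t)=\bigl(f(x+th)\xi\mid\xi\bigr)$ is a $C^2$ scalar function and classical convexity is equivalent to $g''\ge 0$. Since the inequality is unaffected by conjugating $x$ with a unitary (replacing $h$ by $U^*hU$ and $\xi$ by $U^*\xi$), I may restrict to $x$ diagonal with arbitrary diagonal entries $\lambda_1,\dots,\lambda_n\in I$. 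The preceding lemma then rewrites the condition as
\[
\sum_{p=1}^n \bigl(H(p)\eta(p)\mid\eta(p)\bigr)\ge 0\qquad\text{for all hermitian }h\text{ and all }\xi\in\mathbf C^n,
\]
where $\eta(p)=(\xi_1h_{p,1},\dots,\xi_nh_{p,n})$.

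The ``if'' direction is then immediate: positive semi-definiteness of each $H(p)$ forces each summand to be non-negative, so the whole sum is non-negative and $f$ is $n$-convex.

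The real obstacle is the reverse implication, where I must extract positivity of a single $H(p_0)$ from a sum whose summands are coupled through the vectors $\eta(p)$. My device will be to specialise $h$ to a rank-one hermitian perturbation $h=vv^*$ with $v\in\mathbf C^n$, so that $h_{p,j}=v_p\bar v_j$ and hence $\eta(p)=v_p w$, where $w=(\xi_1\bar v_1,\dots,\xi_n\bar v_n)$. The inequality collapses to
\[
\sum_{p=1}^n |v_p|^2\bigl(H(p)w\mid w\bigr)\ge 0.
\]
When every component of $v$ is nonzero, $w$ ranges over all of $\mathbf C^n$ as $\xi$ varies, yielding the matrix inequality $\sum_p |v_p|^2 H(p)\ge 0$. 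Finally, fixing $p_0$ and taking $v=e_{p_0}+\varepsilon\sum_{p\ne p_0}e_p$, I would let $\varepsilon\to 0^+$ to extract $H(p_0)\ge 0$, which completes the argument.
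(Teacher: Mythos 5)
Your proposal is correct and follows essentially the same route as the paper: reduce $n$-convexity to non-negativity of $\frac{d^2}{dt^2}\bigl(f(x+th)\xi\mid\xi\bigr)\big|_{t=0}$, apply the lemma, and decouple the sum $\sum_p\bigl(H(p)\eta(p)\mid\eta(p)\bigr)$ by taking a rank-one $h$ built from a vector with one component of order $1$ and the rest of order $\varepsilon$, then let $\varepsilon\to 0$. The only cosmetic difference is that you first record the clean intermediate inequality $\sum_p|v_p|^2H(p)\ge 0$, which lets you skip the paper's final continuity step for vectors with $\eta_{p}=0$.
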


\begin{proof}
The sufficiency of the conditions is obvious from the above lemma.
Assume now that $ f $ is $ n $-convex and choose $ \lambda_1,\dots,\lambda_n\in I. $

Take a fixed $ p=1,\dots,n $ and a fixed vector $ \eta\in\mathbf C^n. $ To a given $ \varepsilon>0 $ we choose a vector $ \xi $  by setting $ \xi_i=\varepsilon^{-1} $ for $ i\ne p $ and $ \xi_p=1. $  We then choose a vector $ a $ by setting
\[
a_i=\frac{\eta_i}{\xi_i}=\left\{\begin{array}{ll}
                                                                      \varepsilon\eta_i    &i\ne p\\[0.5ex]
                                                                      \eta_p                      &i=p.
                                                                      \end{array}\right.
\]
We finally choose a self-adjoint (actually a positive semi-definite) matrix $ h $ by setting $ h_{i,j}=\bar a_i a_j $ for $ i,j=1,\dots,n $ and calculate
\[
\eta(q)_i=\xi_i h_{q,i}=\xi_i \bar a_q a_i\qquad q=1,\dots,n.
\]
With these choices we obtain
\[
\eta(p)=\bar\eta_p\,\eta\qquad\text{and}\qquad\eta(q)=\varepsilon\bar\eta_q\,\eta\quad\text{for}\quad q\ne p.
\]
Therefore,
\[
\dto{}{t} \bigl(f(x+th)\xi\mid\xi\bigr)\Big|_{t=0}=|\eta_p|^2 \bigl(H(p)\eta\mid\eta)+ \varepsilon^2\sum_{q\ne p}^n |\eta_q|^2  \bigl(H(q)\eta\mid\eta\bigr)
\]
is non-negative, and since $ \eta $ is a fixed vector we obtain 
\[
|\eta_p|^2 \bigl(H(p)\eta\mid\eta)\ge 0
\]
by letting $ \varepsilon $ tend to zero. In particular, $ \bigl(H(p)\eta\mid\eta)\ge 0 $ for all vectors $ \eta\in\mathbf C^n $ with $ \eta_p\ne 0. $  By continuity we finally realize that $ H(p) $ is positive semi-definite.
\end{proof}

\begin{theorem}[Bendat and Sherman]\label{theorem: Bendat and Sherman}
Let $ f $ be a real function in $ C^2(I),  $ where $ I $ is an open interval. Then $ f $ is operator convex if and only if the function
\[
g(t)=\left\{\begin{array}{ll}
                \displaystyle\frac{f(t)-f(t_0)}{t-t_0}\quad &t\ne t_0\\[2.5ex]
                 f'(t_0)                                                           &t=t_0
                 \end{array}\right.
\]
is operator monotone for each $ t_0\in I. $
\end{theorem}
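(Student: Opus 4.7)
The plan is to reduce both directions to the matrix-theoretic criteria already established: by Theorem~\ref{characterization in terms of Loewner matrices}, $g$ is $n$-monotone if and only if its Löwner matrices are positive semi-definite, and by Theorem~\ref{convexity in terms of Kraus matrices}, $f$ is $n$-convex if and only if its Kraus matrices are positive semi-definite. Since $f\in C^2(I)$, a short Taylor expansion around $t_0$ shows $g\in C^1(I)$, so both criteria are available. The bridge between them will come from a divided-difference identity relating $g$ to $f$.

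Concretely, for $t,s\in I$ with $t\neq s$ I compute
\[
[t,s]_g=\frac{g(t)-g(s)}{t-s}=\frac{[t,t_0]_f-[s,t_0]_f}{t-s}=[t,t_0,s]_f=[t_0,t,s]_f,
\]
using symmetry of the second divided difference in the last step, and this extends by continuity to coincident entries. The identity I plan to leverage is therefore
\[
L_g(\lambda_1,\dots,\lambda_n)=\bigl([t_0,\lambda_i,\lambda_j]_f\bigr)_{i,j=1}^n,
\]
so that $2L_g(\lambda_1,\dots,\lambda_n)$ is precisely a matrix of Kraus type for $f$ in which the pivot entry of each second divided difference is pinned to $t_0$.

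With this identity both implications become transparent. For the necessity direction, I assume $f$ operator convex, fix $t_0\in I$ and $\lambda_1,\dots,\lambda_n\in I$, and apply Theorem~\ref{convexity in terms of Kraus matrices} to the augmented tuple $(\lambda_1,\dots,\lambda_n,t_0)$ with distinguished index $n+1$. The resulting Kraus matrix $H(n+1)$ is positive semi-definite by $(n+1)$-convexity, and its upper-left $n\times n$ principal submatrix is exactly $2L_g(\lambda_1,\dots,\lambda_n)$, yielding $n$-monotonicity of $g$. For the sufficiency direction, I fix $\lambda_1,\dots,\lambda_n$ and an index $p$, then set $t_0=\lambda_p$; the Kraus matrix $H(p)$ of $f$ then coincides with $2L_g(\lambda_1,\dots,\lambda_n)$, which is positive semi-definite because $g$ is in particular $n$-monotone.

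I do not anticipate a serious obstacle; the only notational care needed is in the necessity direction, where one must augment the tuple by $t_0$ so that the Löwner matrix of $g$ embeds as a principal submatrix of a genuine Kraus matrix of $f$. Once the divided-difference identity above is verified, the theorem follows directly from the characterizations already proved.
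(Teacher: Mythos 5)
Your proposal is correct and follows essentially the same route as the paper: the key identity $[\lambda_i,\lambda_j]_g=[t_0,\lambda_i,\lambda_j]_f$ combined with Theorem~\ref{characterization in terms of Loewner matrices} and Theorem~\ref{convexity in terms of Kraus matrices}. You merely spell out the details the paper leaves implicit, namely augmenting the tuple by $t_0$ so that $2L_g$ sits as a principal submatrix of the Kraus matrix $H(n+1)$ in one direction, and taking $t_0=\lambda_p$ in the other.
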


\begin{proof}
Using the symmetry of divided differences we realise that
\[
[\lambda_i,\lambda_j]_g=[t_0,\lambda_i,\lambda_j]_f\qquad i,j=1,\dots,n.
\]
If $ f $ is $ (n+1) $-convex then $ g $ is $ n $-monotone for each $ t_0\in I $ by Theorem~\ref{characterization in terms of Loewner matrices} and Theorem~\ref{convexity in terms of Kraus matrices}. Conversely,  if $ g $ is $ n $-monotone for each $ t_0\in I $ then $ f $ is $ n $-convex.
\end{proof}

\subsection{Further preparations}

\begin{theorem}[Bendat and Sherman]\label{theorem: Bendat and Sherman, general case}
Let $ f $ be an operator convex function defined in the positive half-line. Then $ f $ is differentiable, and the function
\[
g(t)=\left\{\begin{array}{ll}
                \displaystyle\frac{f(t)-f(t_0)}{t-t_0}\quad &t\ne t_0\\[2.5ex]
                 f'(t_0)                                                           &t=t_0
                 \end{array}\right.
\]
is operator monotone for each $ t_0>0. $
\end{theorem}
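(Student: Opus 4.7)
The plan is to promote the $ C^2 $ version of Bendat and Sherman (Theorem~\ref{theorem: Bendat and Sherman}) to the general operator convex setting by regularization; the only non-routine obstacle will be the differentiability of $ f. $ As preparation, operator convexity implies $ f $ is $ 1 $-convex, hence ordinary convex on $ (0,\infty), $ so $ f $ is continuous and the one-sided derivatives $ f'_{-}(t_0) $ and $ f'_{+}(t_0) $ exist at every $ t_0>0. $

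First I would apply the regularization of Section~1.1 to produce, for small $ \varepsilon>0, $ smooth operator convex functions $ f_\varepsilon $ on $ (\varepsilon,\infty) $ converging uniformly to $ f $ on compact subsets of $ (0,\infty). $ The $ C^2 $ Bendat--Sherman theorem then yields, for each $ t_0>\varepsilon, $ an operator monotone function $ g_\varepsilon $ on $ (\varepsilon,\infty) $ given by $ g_\varepsilon(t)=(f_\varepsilon(t)-f_\varepsilon(t_0))/(t-t_0) $ off $ t_0 $ and by $ g_\varepsilon(t_0)=f_\varepsilon'(t_0). $

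The central step is the passage to the limit $ \varepsilon\searrow 0 $ at a fixed $ t_0. $ Away from $ t_0 $ uniform convergence of $ f_\varepsilon $ gives $ g_\varepsilon(t)\to g(t), $ but at $ t_0 $ the values $ f_\varepsilon'(t_0) $ are a priori only controlled by the scalar monotonicity sandwich $ g_\varepsilon(a)\le f_\varepsilon'(t_0)\le g_\varepsilon(b) $ for arbitrary $ 0<a<t_0<b, $ which shows they are bounded. I would extract a subsequence $ f_{\varepsilon_k}'(t_0)\to\alpha, $ extend $ g $ to $ (0,\infty) $ by setting $ \tilde g(t_0):=\alpha, $ and observe that $ g_{\varepsilon_k}\to\tilde g $ pointwise on $ (0,\infty). $ Since hermitian matrices have finite spectrum, pointwise convergence is enough to pass to the limit in $ g_{\varepsilon_k}(x)\le g_{\varepsilon_k}(y) $ via the spectral decomposition, so $ \tilde g $ is operator monotone on $ (0,\infty). $

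To close the loop I would invoke Theorem~\ref{theorem: 2n-monotone function is n-concave}: operator monotonicity on $ (0,\infty) $ forces $ \tilde g $ to be continuous, and continuity at $ t_0 $ forces $ f'_{-}(t_0)=f'_{+}(t_0)=\alpha. $ Hence $ f $ is differentiable at $ t_0 $ with $ f'(t_0)=\alpha; $ since every subsequential limit of $ \{f_\varepsilon'(t_0)\} $ must equal $ f'(t_0), $ the whole net converges and $ \tilde g $ coincides with the function $ g $ in the statement, which is therefore operator monotone. The main technical difficulty is precisely this interleaving --- one has to establish operator monotonicity of an \emph{extension} of $ g $ before knowing that $ f $ is differentiable, and only then does the automatic continuity of operator monotone functions return the differentiability of $ f $ for free.
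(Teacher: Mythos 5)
Your proposal is correct and follows essentially the same route as the paper: regularize to smooth operator convex functions, apply the $C^2$ version of Bendat and Sherman, use convexity to bound the derivatives $f_\varepsilon'(t_0)$, extract a convergent subsequence, and invoke the automatic continuity of operator monotone functions (Theorem~\ref{theorem: 2n-monotone function is n-concave}) to recover the differentiability of $f$ at $t_0$. The extra detail you supply --- the sandwich bound $g_\varepsilon(a)\le f_\varepsilon'(t_0)\le g_\varepsilon(b)$ and the explicit identification of the one-sided derivatives with the limit $\alpha$ --- merely fills in steps the paper leaves implicit.
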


\begin{proof}

Suppose $ f $ is operator convex, thus in particularly continuous. Using regularization (with $ \varepsilon < t_0) $ we obtain $ f $ as the point-wise limit, for $ \varepsilon\to 0, $ of a sequence $ (f_{\varepsilon})_{\varepsilon>0} $ of infinitely differentiable operator convex functions. The functions
\[
g_\varepsilon(t)=\left\{\begin{array}{ll}
                \displaystyle\frac{f_\varepsilon(t)-f_\varepsilon(t_0)}{t-t_0}\quad &t\ne t_0\\[3ex]
                 \displaystyle f_\varepsilon'(t_0)\quad                                                           &t=t_0
                 \end{array}\right.
\]
are operator monotone in $ (\epsilon,\infty) $ by Theorem~\ref{theorem: Bendat and Sherman}. In addition, $ g_\varepsilon(t)\to g(t) $ for $ t\ne t_0. $ Since $ f $ is convex the set of derivatives $ \{f'_\varepsilon(t_0)\} $ is bounded for small $ \varepsilon<t_0. $ A subsequence of $ (g_\varepsilon)_{\epsilon>0} $ therefore converges towards an operator monotone function which is continuous according to Theorem~\ref{theorem: 2n-monotone function is n-concave}. But then $ f $ is differentiable in $ t_0 $ and we conclude that $ f'(t_0)=\lim_{\varepsilon\to 0} f_\varepsilon'(t_0). $ 
\end{proof}

In the above proof we also learn that $ f'_\varepsilon(t)\to f'(t) $ for every $ t\in (0,\infty), $ where $ f_\varepsilon $ is the regularization of $ f. $ In connection with Corollary~\ref{monotonicity and concavity} we obtain

 \begin{corollary}\label{corollary: automatic differentiability}
 An operator monotone or operator convex function $ f $ defined in the positive half-line is automatically differentiable and $ f'_\varepsilon(t)\to f'(t) $ for every $ t\in(0,\infty), $ where $ f_\epsilon $ is the regularization of $ f. $
  \end{corollary}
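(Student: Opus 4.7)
The plan is to split into two cases and in each case reduce everything to Theorem~\ref{theorem: Bendat and Sherman, general case} together with the remark just preceding the corollary. The operator convex case is essentially already done. Theorem~\ref{theorem: Bendat and Sherman, general case} asserts that an operator convex $f$ on $(0,\infty)$ is differentiable, and its proof establishes that $f'(t_0)=\lim_{\varepsilon\to 0}f_\varepsilon'(t_0)$ for every $t_0>0$: the difference quotients $g_\varepsilon$ converge point-wise for $t\ne t_0$ to the continuous operator monotone function $g$ of Theorem~\ref{theorem: Bendat and Sherman, general case}, and this pins down every subsequential limit of the bounded family $\{f_\varepsilon'(t_0)\}$ to the single value $g(t_0)=f'(t_0)$. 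So the whole net $f_\varepsilon'(t_0)$, not merely a subsequence, converges to $f'(t_0)$.

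For the operator monotone case I would apply Corollary~\ref{operator monotonicity implies operator concavity}: an operator monotone function $f:(0,\infty)\to\mathbf R$ is automatically operator concave, hence $-f$ is operator convex. The convex case just handled gives differentiability of $-f$ together with $(-f)_\varepsilon'(t)\to(-f)'(t)$ for every $t>0$. Since regularization is linear in $f$, we have $(-f)_\varepsilon=-f_\varepsilon$, and therefore $f$ is itself differentiable with $f_\varepsilon'(t)\to f'(t)$.

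I do not expect a real obstacle here; the only point that requires care is the one already noted, namely that the convergence $f_\varepsilon'(t)\to f'(t)$ is along the full net rather than merely along a subsequence. This is forced by the uniqueness of the limit of the difference-quotient functions $g_\varepsilon$ (which is itself guaranteed by the continuity supplied by Theorem~\ref{theorem: 2n-monotone function is n-concave}), and the argument transfers verbatim from $-f$ to $f$ in the monotone case.
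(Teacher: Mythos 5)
Your proof is correct and follows essentially the same route as the paper: the corollary is obtained directly from the remark that the proof of Theorem~\ref{theorem: Bendat and Sherman, general case} already yields $f'_\varepsilon(t)\to f'(t)$ for operator convex $f$, combined with the reduction of the monotone case to the concave (hence, after a sign change, convex) case. Your explicit observation that the full net of derivatives converges because every subsequential limit is pinned down by the point-wise limit $g$, and your use of Corollary~\ref{operator monotonicity implies operator concavity} (which, unlike Corollary~\ref{monotonicity and concavity}, does not require positivity of $f$), are both apt refinements of what the paper leaves implicit.
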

 
 By applying regularization of $ f  $ and then appealing to Theorem~\ref{characterization in terms of Loewner matrices} and Corollary~\ref{corollary: automatic differentiability}  we obtain:
 
 \begin{corollary}\label{corollary: positivity of Loewner matrices}
Let $ f $ be an operator monotone function defined in the positive half-line. The Löwner matrices $  L(\lambda_1,\dots,\lambda_n) $ associated with $ f $ are well-defined and positive semi-definite for arbitrary  $ \lambda_1,\dots,\lambda_n\in I. $ 
\end{corollary}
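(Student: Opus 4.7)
The plan is to obtain the corollary as a limiting statement from the smooth case handled by Theorem~\ref{characterization in terms of Loewner matrices}, using the regularization technology together with the pointwise convergence of derivatives supplied by Corollary~\ref{corollary: automatic differentiability}.

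First I would note that $f$ is differentiable on $(0,\infty)$ by Corollary~\ref{corollary: automatic differentiability}, so the Löwner matrix $L(\lambda_1,\dots,\lambda_n)$ is well-defined in the first place: the off-diagonal entries $[\lambda_i,\lambda_j]_f$ are just difference quotients of a continuous function, while the diagonal entries $[\lambda_i,\lambda_i]_f=f'(\lambda_i)$ exist by the same corollary.

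Next, fix $\lambda_1,\dots,\lambda_n\in(0,\infty)$ and choose $\varepsilon>0$ small enough that all of the $\lambda_i$ lie in $(\varepsilon,\infty)$. As observed in the regularization subsection, the regularized function $f_\varepsilon$ is infinitely differentiable and operator monotone on $(\varepsilon,\infty)$. Applying Theorem~\ref{characterization in terms of Loewner matrices} to $f_\varepsilon$ on that interval yields that the Löwner matrix
\[
L_{f_\varepsilon}(\lambda_1,\dots,\lambda_n)=\Big([\lambda_i,\lambda_j]_{f_\varepsilon}\Big)_{i,j=1}^n
\]
is positive semi-definite for every sufficiently small $\varepsilon>0$.

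Finally, I would let $\varepsilon\searrow 0$ entry-wise. For $\lambda_i\ne\lambda_j$ the off-diagonal entries converge because $f_\varepsilon\to f$ pointwise, while for $\lambda_i=\lambda_j$ the diagonal entries $f_\varepsilon'(\lambda_i)$ converge to $f'(\lambda_i)$ by Corollary~\ref{corollary: automatic differentiability}. Since the cone of positive semi-definite matrices is closed, the limit $L(\lambda_1,\dots,\lambda_n)$ is positive semi-definite, which is the desired conclusion. The only delicate point in the argument is the diagonal convergence, and that is exactly the content that Corollary~\ref{corollary: automatic differentiability} was designed to provide; once it is in hand the proof is a routine closure argument.
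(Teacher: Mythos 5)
Your proposal is correct and follows exactly the route the paper intends: the paper presents this corollary with the one-line justification ``by applying regularization of $f$ and then appealing to Theorem~\ref{characterization in terms of Loewner matrices} and Corollary~\ref{corollary: automatic differentiability}'', and your argument simply fills in those steps, with the closure of the positive semi-definite cone and the convergence $f_\varepsilon'(\lambda_i)\to f'(\lambda_i)$ handling the only delicate points. Nothing to add.
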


 \begin{corollary}\label{the derivative of a non-constant 2-monotone function cannot have a zero}
Let $ f $ be an operator monotone function defined in the open half-line. If the derivative $ f'(t)=0 $ in any point $ t>0, $ then $ f $ is a constant function.
\end{corollary}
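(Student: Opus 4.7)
The plan is to invoke Corollary~\ref{corollary: positivity of Loewner matrices} at the two-point level and exploit the elementary fact that a positive semi-definite matrix with a zero entry on the diagonal must have the entire corresponding row and column equal to zero.

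Concretely, suppose $f'(t_0)=0$ at some $t_0>0$. For an arbitrary $s>0$ with $s\ne t_0$, I would write down the $2\times 2$ Löwner matrix
\[
L(t_0,s)=\begin{pmatrix} f'(t_0) & [t_0,s]_f \\[0.5ex] [t_0,s]_f & f'(s) \end{pmatrix}
=\begin{pmatrix} 0 & [t_0,s]_f \\[0.5ex] [t_0,s]_f & f'(s) \end{pmatrix},
\]
which by Corollary~\ref{corollary: positivity of Loewner matrices} is positive semi-definite (the derivatives exist by Corollary~\ref{corollary: automatic differentiability}). From $f'(t_0)=0$ and positive semi-definiteness (equivalently, a nonnegative $2\times 2$ determinant combined with nonnegative diagonal) one concludes that the off-diagonal entry also vanishes, i.e.\ $[t_0,s]_f=0$. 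Hence $f(s)=f(t_0)$ for every $s>0$, so $f$ is constant.

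There is no real obstacle here once the Löwner matrix is available; the key input is that the diagonal of the Löwner matrix records the derivatives, and Corollary~\ref{corollary: automatic differentiability} guarantees the derivative exists everywhere on $(0,\infty)$, so the matrix $L(t_0,s)$ is actually well-defined. The remaining argument is purely the linear-algebraic fact about PSD matrices with a zero on the diagonal, which is immediate from the nonnegativity of the principal $2\times 2$ minor.
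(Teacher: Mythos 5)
Your argument is correct and is essentially identical to the paper's own proof: both use Corollary~\ref{corollary: positivity of Loewner matrices} on the $2\times 2$ Löwner matrix $L(t_0,s)$ and the nonnegativity of its determinant, $f'(t_0)f'(s)\ge [t_0,s]_f^2$, to force the divided difference to vanish. No gaps.
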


\begin{proof}
The Löwner matrix
\[
L(t,s)=\begin{pmatrix}
            f'(t) & [t,s]_f\\[0.5ex]
            [s,t]_f & f'(s)
            \end{pmatrix}\qquad t\ne s
\]
is well-defined and positive semi-definite by Corollar~\ref{corollary: positivity of Loewner matrices}, thus
\[
f'(t)f'(s)\ge\left(\frac{f(t)-f(s)}{t-s}\right)^2.
\]
If $ f'(t)=0, $ then necessarily $ f(s)=f(t) $ for every $ s>0. $
\end{proof}

Notice that we for this result only need $ 2 $-monotonicity of $ f, $ cf.~\cite{kn:hansen:2007:1}.

\section{The fast track to Löwner's theorem}

 \begin{lemma}\label{main lemma on involutions}
 Let $ f\colon(0,\infty)\to(0,\infty) $ be an operator monotone function. The function $ t\to t^{-1}f(t) $ is operator monotone decreasing. 
 \end{lemma}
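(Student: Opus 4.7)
My plan is to combine Bendat and Sherman's theorem with a pointwise limit as $ t_0\searrow 0. $ Since $ f\colon(0,\infty)\to(0,\infty) $ is operator monotone, Corollary~\ref{monotonicity and concavity} makes $ f $ operator concave, so that $ -f $ is operator convex on the positive half-line. Theorem~\ref{theorem: Bendat and Sherman, general case} applied to $ -f $ then shows that, for every $ t_0>0, $ the function
\[
g_{t_0}(t)=\frac{f(t_0)-f(t)}{t-t_0}\quad (t\ne t_0),\qquad g_{t_0}(t_0)=-f'(t_0)
\]
is operator monotone on $ (0,\infty). $ Equivalently, the divided difference $ (f(t)-f(t_0))/(t-t_0) $ is operator monotone decreasing for every $ t_0>0. $

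The second step is to let $ t_0\searrow 0. $ Because $ f $ is monotone increasing and strictly positive, the one-sided limit $ f(0^+)=\inf_{t>0}f(t) $ exists in $ [0,\infty). $ For each fixed $ t>0 $ we have
\[
g_{t_0}(t)\longrightarrow\frac{f(0^+)-f(t)}{t}\qquad\text{as } t_0\searrow 0,
\]
so the remark that pointwise limits of operator monotone functions are operator monotone gives that $ t\mapsto (f(0^+)-f(t))/t $ is operator monotone. Since $ f(0^+)\ge 0 $ and $ t^{-1} $ is operator monotone decreasing, the function $ t\mapsto -f(0^+)/t $ is also operator monotone, and adding the two operator monotone functions shows that
\[
-\frac{f(t)}{t}=\frac{f(0^+)-f(t)}{t}-\frac{f(0^+)}{t}
\]
is operator monotone, which is precisely the desired assertion that $ t\mapsto t^{-1}f(t) $ is operator monotone decreasing.

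The step I expect to be the most delicate is the passage to the limit $ t_0\searrow 0: $ it rests on the finiteness of $ f(0^+) $ (immediate from monotonicity and positivity of $ f $) together with the preservation of operator monotonicity under pointwise limits, which is already recorded in the preliminaries. The differentiability of $ f $ required to invoke Bendat and Sherman is supplied by Corollary~\ref{corollary: automatic differentiability}, so no further smoothness hypothesis is needed.
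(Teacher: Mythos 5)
Your proof is correct and follows essentially the same route as the paper: both rest on Bendat--Sherman applied at a base point pushed to the left endpoint of the domain, the decomposition of $t^{-1}f(t)$ into a divided difference plus a non-negative multiple of the operator monotone decreasing function $t^{-1}$, and a pointwise limit. The only cosmetic difference is that the paper shifts the function, setting $f_\varepsilon(t)=f(t+\varepsilon)$ so the base point can be taken at $0$ exactly and then letting $\varepsilon\to 0$, whereas you keep $f$ fixed and let the base point $t_0\searrow 0$, using $f(0^+)\ge 0$ in place of $f(\varepsilon)>0$.
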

  
 \begin{proof}
 For $ \varepsilon>0 $ the function $ f_{\varepsilon}(t)= f(t+\varepsilon) $ is defined in the open set $ (-\varepsilon,\infty) $ containing zero. Since $ f $ and hence $ f_{\varepsilon} $ are operator monotone and therefore operator concave by Corollary~\ref{operator monotonicity implies operator concavity} we may use Theorem~\ref{theorem: Bendat and Sherman, general case} (Bendat and Sherman) to obtain that the function
 \[
 t\to\frac{f_{\varepsilon}(t)-f_{\varepsilon}(0)}{t-0}=\frac{f(t+\varepsilon)-f(\varepsilon)}{t}
 \]
 is operator monotone decreasing.  By using  $ f(\varepsilon)>0 $ and the identity
 \[
 \frac{f(t+\varepsilon)}{t}=\frac{f(t+\varepsilon)-f(\varepsilon)}{t}+\frac{f(\varepsilon)}{t}
  \]
  we realize that the function $ t\to t^{-1} f(t+\varepsilon) $ is operator monotone decreasing when restricted to the positive half-line. The result now follows by letting $ \varepsilon $ tend to zero.\hfill $ \Box $
 \end{proof}

 \begin{corollary}\label{the two involution of positive oper mon func}
 Let $ f\colon(0,\infty)\to(0,\infty) $ be an operator monotone function. The functions
 \[
 f^\sharp(t)=t f(t)^{-1}\quad\text{and}\quad f^*(t)=t f(t^{-1})
 \]
 are operator monotone in the positive half-line.
\end{corollary}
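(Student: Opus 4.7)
The plan is to deduce both assertions directly from the preceding lemma, which tells us that $h(t):=t^{-1}f(t)$ is operator monotone decreasing and, by hypothesis, strictly positive on $(0,\infty)$.

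For $f^\sharp$ the idea is to write
\[
f^\sharp(t)=tf(t)^{-1}=\bigl(t^{-1}f(t)\bigr)^{-1}=h(t)^{-1}.
\]
I would then invoke the standard fact that the scalar map $s\mapsto s^{-1}$ is operator monotone decreasing on $(0,\infty)$: if positive definite matrices $A,B$ satisfy $A\le B$ then $A^{-1}\ge B^{-1}$. Combining this with the operator-monotone decreasing property of $h$ (applied to $x\le y$ positive definite), we get $h(x)\ge h(y)>0$ and therefore $h(x)^{-1}\le h(y)^{-1}$, which is exactly operator monotonicity of $f^\sharp$.

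For $f^*$ the observation is that
\[
f^*(t)=tf(t^{-1})=h(t^{-1}),
\]
so $f^*$ is the composition of $h$ with the map $t\mapsto t^{-1}$. Both factors are operator monotone decreasing on $(0,\infty)$, and the composition of two operator monotone decreasing maps (between subsets of $(0,\infty)$) is operator monotone increasing: indeed, $x\le y$ implies $x^{-1}\ge y^{-1}$, and then $h(x^{-1})\le h(y^{-1})$ since $h$ reverses order. Hence $f^*$ is operator monotone.

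There is no genuine obstacle here; the main point is already contained in Lemma~\ref{main lemma on involutions}, and the corollary is a bookkeeping exercise based on the two elementary principles that the reciprocal of a positive operator monotone decreasing function is operator monotone increasing, and that the composition of two operator monotone decreasing functions is operator monotone. One small technical care to take is that both constructions require values in $(0,\infty)$ so that $h(t)^{-1}$ and $h(t^{-1})$ are unambiguously defined; this is guaranteed by the assumption $f\colon(0,\infty)\to(0,\infty)$.
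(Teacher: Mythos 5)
Your proposal is correct and follows essentially the same route as the paper: both deduce the corollary from Lemma~\ref{main lemma on involutions} together with the facts that inversion is order-reversing on positive definite matrices and that composing order-reversing operator maps yields an order-preserving one. The only (cosmetic) difference is in the treatment of $f^*$: you write $f^*(t)=h(t^{-1})$ with $h(t)=t^{-1}f(t)$ and compose, whereas the paper applies the $\sharp$-argument a second time to the operator monotone function $t\mapsto f(t^{-1})^{-1}$; the two computations are the same.
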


\begin{proof}
Since $ t\to f^\sharp(t)^{-1}=t^{-1}f(t) $ is operator monotone decreasing by the above lemma it follows that $ f^\sharp $ is operator monotone (increasing). The second assertion follows from the same argument by first replacing $ f $ with the operator monotone function $ t\to f(t^{-1})^{-1}. $ 
\end{proof}

The corollary states that the mappings $ f\to f^\sharp $ and $ f\to f^* $ are involutions of the set of positive operator monotone functions defined in the positive half-line.  

\begin{lemma}\label{bound for positive operator monotone function}
We have the bound $ f(t)\le t+1 $ for any positive operator monotone function $ f $ defined in the positive half-line with $ f(1)=1. $
\end{lemma}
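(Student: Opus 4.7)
The plan is to deduce the bound from the ordinary concavity of $f$ at the point $t=1$, combined with symmetric information coming from the involution $f^*$.

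First I would note that by Corollary~\ref{operator monotonicity implies operator concavity} the function $f$ is operator concave, hence in particular concave in the scalar sense, and by Corollary~\ref{corollary: automatic differentiability} it is differentiable. A concave differentiable function lies below its tangent, so
\[
f(t)\le f(1)+f'(1)(t-1)=1+f'(1)(t-1)\qquad t>0.
\]
Since $f$ is monotone increasing we have $f'(1)\ge 0$, so all that remains is to show $f'(1)\le 1$.

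This is where I would invoke the involution of Corollary~\ref{the two involution of positive oper mon func}: the function $f^*(t)=tf(t^{-1})$ is again a positive operator monotone function on $(0,\infty)$, and hence is differentiable with nonnegative derivative. A direct calculation gives
\[
(f^*)'(1)=f(1)-f'(1)=1-f'(1),
\]
so the monotonicity of $f^*$ forces $f'(1)\le 1$.

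Combining the two bounds, and using $0\le f'(1)\le 1$,
\[
f(t)\le 1+f'(1)(t-1)=(1-f'(1))+f'(1)\,t\le 1+t\qquad t>0,
\]
which is the desired inequality. The only substantive step is the extraction of $f'(1)\le 1$; everything else is the standard tangent-line estimate for concave functions. Since the differentiability and the concavity have already been secured in the preceding corollaries, and the involution $f^*$ has already been shown to be operator monotone, no further obstacle is expected.
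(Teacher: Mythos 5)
Your proof is correct, but it takes a genuinely different route from the paper's. The paper argues in two pieces, using only monotonicity and concavity: for $0<t\le 1$ it simply notes $f(t)\le f(1)=1\le t+1$, and for $t>1$ it uses the concavity supplied by Theorem~\ref{theorem: 2n-monotone function is n-concave} to bound $f(t)$ by the continuation of the chord through $\bigl(0,\lim_{\varepsilon\to 0}f(\varepsilon)\bigr)$ and $(1,1)$; since the intercept lies in $[0,1]$ by positivity, that continuation is at most $t+1$. No differentiability and no involution enter. You instead use the tangent-line estimate at $t=1$ together with the bound $f'(1)\le 1$, which you extract from the monotonicity of $f^*$ via the identity $(f^*)'(1)=f(1)-f'(1)$; that identity is precisely the content of the paper's later unnumbered lemma on the sum of the derivatives, so you are anticipating a result the paper only establishes afterwards. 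There is no circularity, since Corollary~\ref{the two involution of positive oper mon func} and Corollary~\ref{corollary: automatic differentiability} both precede the lemma in the text, and your chain-rule computation is valid once differentiability is in hand. What your route buys is a marginally sharper conclusion, $f(t)\le (1-f'(1))+f'(1)t\le\max(1,t)$, at the cost of invoking the differentiability corollary and the involution machinery where the paper's argument needs only that $f$ is increasing, concave and non-negative.
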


\begin{proof}
Since $ f $ is increasing we obviously have
\[
f(t)\le f(1)=1\le t+1\qquad\text{for }\, 0<t\le 1.
\] 
We also notice that $ f $ is concave by Theorem~\ref{theorem: 2n-monotone function is n-concave}.
It follows, for $ t>1, $ that $ f(t) $ is bounded by the continuation of the chord between $ (0, \lim_{\varepsilon\to 0}f(\varepsilon)) $ and $ (1,f(1))=(1,1). $ But the continuation of this chord is bounded by $ t+1. $
\end{proof}

Let $ \mathcal P $ denote the set of positive operator monotone functions
defined in the positive half-line and consider the convex set
\[
\mathcal P_0=\{f\in\mathcal P\mid f(1)=1\}.
\]
We equip $ \mathcal P_0 $ with the topology of point-wise convergence and realize, by the preceding lemma, that $ \mathcal P_0 $ is compact in this topology.

\begin{theorem}\label{reduction to positive functions}
Let $ f\colon(0,\infty)\to\mathbf R $ be a non-constant operator monotone function. Then $ f $ can be written on the form
\[
f(t)=f(1)+f'(1)\frac{t-1}{t} (\T f)(t)\qquad t>0,
\]
where $ \T f\in \mathcal P_0   $ is given by 
\[
(\T f)(t)=\frac{t}{f'(1)}\cdot \left\{\begin{array}{ll}
                                            \displaystyle \frac{f(t)-1}{t-1}\qquad &t\ne 1\\[2ex]
                                            f'(1)                                                        &t=1.
                                            \end{array}\right.
\]
\end{theorem}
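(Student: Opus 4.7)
The plan is to unfold the definition of $\T f$ into a short chain of elementary operations, each of which preserves (a signed version of) operator monotonicity. Before anything else, I need $f'(1)$ to exist and to be strictly positive. Differentiability is supplied by Corollary~\ref{corollary: automatic differentiability}, and since $f$ is non-constant and $2$-monotone, Corollary~\ref{the derivative of a non-constant 2-monotone function cannot have a zero} together with monotonicity gives $f'(1)>0$. Once this is in place, the identity $f(t)=f(1)+f'(1)\frac{t-1}{t}(\T f)(t)$ is an algebraic rearrangement of the formula for $\T f$, and $(\T f)(1)=1$ follows from the piecewise definition. So the entire content of the theorem is the assertion that $\T f$ is positive and operator monotone.

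The core argument goes through the secant-slope function
\[
g(t)=\frac{f(t)-f(1)}{t-1},\qquad g(1)=f'(1).
\]
By Corollary~\ref{operator monotonicity implies operator concavity} the function $-f$ is operator convex on $(0,\infty)$, so Theorem~\ref{theorem: Bendat and Sherman, general case} applied to $-f$ at $t_{0}=1$ yields that $-g$ is operator monotone; equivalently, $g$ is operator monotone \emph{decreasing}. Monotonicity of $f$ makes $g$ non-negative, and if $g$ vanished at some $t_{1}$ then $f$ would be constant on the interval between $t_{1}$ and $1$, forcing $f'$ to have a zero and contradicting non-constancy via Corollary~\ref{the derivative of a non-constant 2-monotone function cannot have a zero}. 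Hence $g\colon(0,\infty)\to(0,\infty)$. Because operator inversion reverses the order of positive definite matrices, a positive operator monotone decreasing function becomes operator monotone increasing upon reciprocation; therefore $q(t):=1/g(t)$ is positive and operator monotone, as is the positive multiple $h:=f'(1)\,q$.

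Finally, Corollary~\ref{the two involution of positive oper mon func} tells us that the sharp involution $h^{\sharp}(t)=t/h(t)$ is again positive and operator monotone. A direct computation gives
\[
h^{\sharp}(t)=\frac{t}{f'(1)\,q(t)}=\frac{t\,g(t)}{f'(1)}=(\T f)(t),
\]
so $\T f\in\mathcal P_{0}$. The main step is the application of Bendat--Sherman to $-f$: once one recognises that operator concavity of $f$ can be routed through that theorem to produce a \emph{decreasing} operator monotone secant-slope $g$, the reciprocation-and-sharp manoeuvre compensates the sign and inserts the factor of $t$ in exactly the way required by the formula defining $\T f$.
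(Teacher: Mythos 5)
Your proof is correct and takes essentially the same route as the paper: both derive from Theorem~\ref{theorem: Bendat and Sherman, general case} (via operator concavity of $f$) that the secant slope $g(t)=(f(t)-f(1))/(t-1)$ is positive and operator monotone decreasing, and then pass through the involutions of Corollary~\ref{the two involution of positive oper mon func} to conclude that $t\,g(t)/f'(1)=(\T f)(t)$ is operator monotone. The only cosmetic difference is that you reciprocate $g$ and apply $h\mapsto h^\sharp$, whereas the paper composes with $t\mapsto t^{-1}$ and applies $h\mapsto h^*$; both manipulations yield the identical function.
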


Notice that $ f'(1)>0 $ by Corollary~\ref{the derivative of a non-constant 2-monotone function cannot have a zero} since $ f $ is non-constant.

\begin{proof}
The function
\[
h_1(t)=\frac{1}{f'(1)}\cdot\frac{f(t)-f(1)}{t-1}
\]
is positive since $ f $ is strictly increasing, and $ h_1(1)=1. $ Since $ f $ is operator monotone and thus operator concave the function $ h_1 $ is operator monotone decreasing by Theorem~\ref{theorem: Bendat and Sherman, general case}.  By composing with the operator monotone decreasing function $ t\to t^{-1} $ we obtain that
\[
h_2(t)=h_1(t^{-1})=\frac{1}{f'(1)}\cdot\frac{f(t^{-1})-f(1)}{t^{-1}-1}
\]
is positive and operator monotone with $ h_2(1)=1. $ By applying the involution $ h_2\to h_2^* $ we finally obtain that the function
\[
(\T f) (t)=h_2^*(t)=t h_2(t^{-1})= \frac{t}{f'(1)}\cdot\frac{f(t)-f(1)}{t-1}
\]
is operator monotone by Corollary~\ref{the two involution of positive oper mon func}. It is also positive and   $ (\T f)(1)=1. $  The assertion now follows by solving the equation for $f. $
\end{proof}

\begin{lemma}\label{The two invariant operations}
The involution $ f\to f^* $ maps $ \mathcal P_0 $ into itself, and the operation $ f\to\T f $ maps the non-constant functions in $ \mathcal P_0 $ into $ \mathcal P_0. $ 
\end{lemma}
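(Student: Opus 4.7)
The plan is to verify both statements by directly invoking results already established in the excerpt; the lemma is essentially a clean packaging of consequences of Corollary~\ref{the two involution of positive oper mon func} and Theorem~\ref{reduction to positive functions}.

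For the first claim, I would start with an arbitrary $f\in\mathcal P_0$. By definition $f$ is a positive operator monotone function on $(0,\infty)$, so Corollary~\ref{the two involution of positive oper mon func} immediately gives that $f^*(t)=tf(t^{-1})$ is operator monotone on the positive half-line. Positivity of $f^*$ is immediate from positivity of $f$ together with $t>0$. The only further item is the normalisation: evaluating at $t=1$ yields $f^*(1)=1\cdot f(1)=1$, so $f^*\in\mathcal P_0$.

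For the second claim, let $f\in\mathcal P_0$ be non-constant. Since $f$ is operator monotone, Corollary~\ref{the derivative of a non-constant 2-monotone function cannot have a zero} ensures $f'(1)>0$, so the formula for $\T f$ is well-defined. Theorem~\ref{reduction to positive functions} has already verified that the function defined by the displayed formula is positive, operator monotone on $(0,\infty)$, and satisfies $(\T f)(1)=1$. Hence $\T f\in\mathcal P_0$.

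There is no real obstacle here; the lemma is a bookkeeping statement asserting that $\mathcal P_0$ (respectively, its non-constant elements) is stable under the two operations, which is precisely what was constructed in the two previous results. The only point requiring a brief comment is the hypothesis of non-constancy, which is exactly what is needed to guarantee $f'(1)\neq 0$ so that the denominator in the definition of $\T f$ does not vanish; this is supplied by Corollary~\ref{the derivative of a non-constant 2-monotone function cannot have a zero}.
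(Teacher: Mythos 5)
Your proof is correct and follows exactly the paper's route: the paper's own proof consists of the single line that the lemma ``follows immediately from Corollary~\ref{the two involution of positive oper mon func} and Theorem~\ref{reduction to positive functions}.'' You have merely spelled out the small verifications (positivity of $f^*$, the normalisation $f^*(1)=f(1)=1$, and $f'(1)>0$ via Corollary~\ref{the derivative of a non-constant 2-monotone function cannot have a zero}) that the paper leaves implicit.
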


\begin{proof}
Follows immediately from Corollary~\ref{the two involution of positive oper mon func} and Theorem~\ref{reduction to positive functions}.
\end{proof}

\begin{lemma}
The sum of the derivatives
\[
\left.\df{}{t}f(t)\right|_{t=1}+\left.\df{}{t}f^*(t)\right|_{t=1}=1
\]
for any $ f\in \mathcal P_0. $
\end{lemma}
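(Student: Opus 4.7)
The plan is to compute $(f^*)'(1)$ explicitly from the formula $f^*(t) = tf(t^{-1})$ and observe that it combines with $f'(1)$ to give exactly $1$. Since the identity is essentially a one-line calculus exercise once differentiability is in hand, the proof should be short; the conceptual content is in recognizing that the ingredients are already available.

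First, I would invoke Corollary~\ref{corollary: automatic differentiability}, which guarantees that any operator monotone function on the positive half-line is differentiable. In particular, since $f \in \mathcal{P}_0$ is operator monotone, $f'$ exists on all of $(0,\infty)$, and the composition $t \mapsto f(t^{-1})$ is differentiable wherever $t > 0$. This legitimizes the product and chain rules for $f^*(t) = t f(t^{-1})$.

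Next, I would differentiate $f^*$ directly. By the product rule together with the chain rule,
\[
(f^*)'(t) = f(t^{-1}) + t \cdot f'(t^{-1}) \cdot \left(-\frac{1}{t^2}\right) = f(t^{-1}) - \frac{f'(t^{-1})}{t}.
\]
Evaluating at $t=1$ and using $f(1) = 1$ yields
\[
(f^*)'(1) = f(1) - f'(1) = 1 - f'(1).
\]
Adding $f'(1)$ to both sides gives the claimed identity.

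There is no real obstacle here beyond invoking the differentiability result; the computation itself is routine, and the fact that $f^* \in \mathcal{P}_0$ (by Lemma~\ref{The two invariant operations}) ensures that $(f^*)'(1)$ is a meaningful quantity in the stated context. The elegance of the identity comes from the symmetric role of $f$ and $f^*$ under the involution $t \mapsto t^{-1}$, but the proof itself does not need to exploit this symmetry beyond the direct differentiation.
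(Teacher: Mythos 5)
Your proof is correct. The computation $(f^*)'(t) = f(t^{-1}) - t^{-1}f'(t^{-1})$ is right, and evaluating at $t=1$ with $f(1)=1$ gives $(f^*)'(1)=1-f'(1)$ as claimed; the appeal to Corollary~\ref{corollary: automatic differentiability} is exactly what is needed to license the product and chain rules, since it gives differentiability of $f$ on all of $(0,\infty)$ and not merely at the single point $t=1$.

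The paper takes a slightly different route: it verifies the exact algebraic identity
\[
\frac{f(t)-1}{t-1}+\frac{f^*(t^{-1})-1}{t^{-1}-1}=1\qquad t\ne 1,
\]
which follows from $f^*(t^{-1})=t^{-1}f(t)$ by elementary manipulation, and then lets $t\to 1$ so that the two difference quotients converge to $f'(1)$ and $(f^*)'(1)$ respectively. That argument only uses the existence of the two derivatives as limits of difference quotients at the single point $t=1$, whereas yours differentiates the formula for $f^*$ as a function, requiring differentiability of $f$ in a neighbourhood of $1$. Since the paper has already established differentiability everywhere on the half-line, the extra hypothesis costs nothing, and the two proofs are of essentially equal length; the paper's version is marginally more self-contained (no chain rule), while yours makes the mechanism $(f^*)'(1)=f(1)-f'(1)$ more transparent. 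Either is acceptable.
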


\begin{proof}The assertion follows from the calculation
\[
\frac{f(t)-1}{t-1}+\frac{f^*(t^{-1})-1}{t^{-1}-1}=1
\]
by letting $ t $ tend to $ 1. $
\end{proof}

Both $ f $ and $ f^* $ are increasing functions. By Corollary~\ref{the derivative of a non-constant 2-monotone function cannot have a zero} we therefore obtain:

\begin{corollary}
The derivative of $ f $ satisfies
\[
0<f'(1)<1
\]
for any function $ f\in\mathcal P_0 $ different from the constant function $ t\to1 $ or the identity function
$ t\to t. $

\end{corollary}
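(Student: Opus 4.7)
The plan is to combine the immediately preceding lemma, which gives $f'(1) + (f^*)'(1) = 1$, with the earlier Corollary~\ref{the derivative of a non-constant 2-monotone function cannot have a zero} applied to both $f$ and $f^*$. The statement right before the corollary already points in this direction: both $f$ and $f^*$ are increasing, hence each derivative at $1$ is non-negative, and together they sum to $1$, so each lies in the closed interval $[0,1]$. Upgrading the inequalities to be strict is exactly what the vanishing-derivative corollary delivers, provided I can rule out that either $f$ or $f^*$ is the constant function $1$.

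The first step is to record that $f^* \in \mathcal P_0$: by Lemma~\ref{The two invariant operations} the involution $f \to f^*$ preserves $\mathcal P_0$, so $f^*$ is an operator monotone function on $(0,\infty)$ with $f^*(1)=1$. In particular both $f$ and $f^*$ are differentiable at $1$ by Corollary~\ref{corollary: automatic differentiability}, and the previous lemma applies to give $f'(1)+(f^*)'(1)=1$.

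Next I would identify exactly when $f^*$ is the constant function $t \to 1$. Unfolding the definition $f^*(t) = t f(t^{-1})$, the identity $f^*(t) = 1$ for all $t > 0$ is equivalent to $f(t^{-1}) = t^{-1}$ for all $t > 0$, that is, $f$ is the identity function $t \to t$. Since by hypothesis $f$ is neither the constant function $1$ nor the identity, both $f$ and $f^*$ are non-constant members of $\mathcal P_0$.

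Finally, I would apply Corollary~\ref{the derivative of a non-constant 2-monotone function cannot have a zero} to each of $f$ and $f^*$: the derivative of a non-constant operator monotone function on the positive half-line cannot vanish, so $f'(1) > 0$ and $(f^*)'(1) > 0$. Substituting into $f'(1)+(f^*)'(1)=1$ then forces $f'(1) < 1$, giving the desired strict inequalities $0 < f'(1) < 1$. There is no real obstacle here; the only point that requires a moment of care is the clean characterization $f^* \equiv 1 \iff f = \mathrm{id}$, which ensures that the two excluded functions in the statement are precisely the two degenerate cases making one of the derivatives vanish.
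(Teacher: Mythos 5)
Your proposal is correct and follows essentially the same route as the paper, which derives the corollary directly from the lemma $f'(1)+(f^*)'(1)=1$ together with Corollary~\ref{the derivative of a non-constant 2-monotone function cannot have a zero} applied to the increasing functions $f$ and $f^*$. You merely make explicit the (correct) observation, left implicit in the paper, that $f^*\equiv 1$ exactly when $f$ is the identity.
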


\begin{lemma}\label{formula for the extreme points}
An extreme point $ f $ in $ \mathcal P_0 $ is necessarily of the form
\[
f(t)=\frac{t}{f'(1)+(1-f'(1))t}\qquad t>0.
\]
\end{lemma}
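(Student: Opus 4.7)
If $f$ is the constant function $t\mapsto 1$ or the identity $t\mapsto t$, then $f'(1)$ equals $0$ or $1$ respectively and a direct substitution shows that $f$ already has the form in the statement, so I may assume $f$ is non-trivial and write $\alpha:=f'(1)\in(0,1)$ by the preceding corollary. The plan is to produce an explicit proper convex decomposition
\[
f(t)=\alpha\,(\T f)(t)+(1-\alpha)\,h(t),\qquad t>0,
\]
with both summands in $\mathcal P_{0}$. Extremality of $f$ will then force $\T f=h=f$, and unpacking $\T f=f$ via the definition of $\T$ gives the scalar equation $f(t)\bigl[\alpha+(1-\alpha)t\bigr]=t$, which solves to the stated formula.

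The candidate $h$ is dictated by rearranging the expression from Theorem~\ref{reduction to positive functions}: I set
\[
h(t)=\frac{t-f(t)}{(1-\alpha)(t-1)}\quad(t\neq 1),\qquad h(1)=1,
\]
after which a short algebraic verification confirms $f=\alpha(\T f)+(1-\alpha)h$ on $(0,\infty)$. The main obstacle is showing $h\in\mathcal P_{0}$. The normalisation $h(1)=1$ is immediate from $f(1)=1$ and $f'(1)=\alpha$. For positivity, Corollary~\ref{operator monotonicity implies operator concavity} makes $f$ ordinary concave, so using $f(0^{+})\ge 0$ and writing $t\in(0,1)$ as a convex combination of $0^{+}$ and $1$ yields $f(t)\ge t$, while writing $1$ as a convex combination of $0^{+}$ and $t>1$ yields $f(t)\le t$; hence $t-f(t)$ and $t-1$ share a sign and $h\ge 0$.

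For the operator monotonicity of $h$ I rewrite
\[
h(t)=\frac{1-\alpha h_{1}(t)}{1-\alpha},\qquad h_{1}(t)=\frac{f(t)-1}{\alpha(t-1)},
\]
and apply Theorem~\ref{theorem: Bendat and Sherman, general case} (Bendat and Sherman) to the operator convex function $-f$: the slope function of $f$ based at $1$ is operator monotone decreasing, so $h_{1}$ is operator monotone decreasing, whence $h$, being an affine function of $h_{1}$ with a negative leading coefficient and positive constant term, is operator monotone increasing. This places $h$ in $\mathcal P_{0}$; combined with $\T f\in\mathcal P_{0}$ from Lemma~\ref{The two invariant operations}, the extremality argument outlined above finishes the proof.
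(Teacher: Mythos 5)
Your proof is correct and follows essentially the same route as the paper: the convex decomposition $f=f'(1)\,\T{f}+(1-f'(1))\,h$ is exactly the one used there, your
\[
h(t)=\frac{t-f(t)}{(1-f'(1))(t-1)}
\]
being precisely the function the paper writes as $(\T{f^*})^*$. The only difference is that you verify $h\in\mathcal P_0$ by direct computation (positivity from concavity together with $f(0^+)\ge 0$, and operator monotonicity from Theorem~\ref{theorem: Bendat and Sherman, general case} applied to $-f$), whereas the paper obtains it at once from the invariance of $\mathcal P_0$ under the involution $f\mapsto f^*$ and the map $\T{}$ established in Corollary~\ref{the two involution of positive oper mon func} and Lemma~\ref{The two invariant operations}.
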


\begin{proof}
Take first a function $ f\in\mathcal P_0 $ which is neither the constant function $ t\to 1 $
nor the identity function $ t\to t, $ thus $ \lambda=f'(1)\in(0,1) $ by the above corollary. An elementary calculation shows that
\begin{equation}
\lambda\T{f} + (1-\lambda) (\T{f^*})^*=f.
\end{equation}
Indeed,
\[
\lambda(\T{f})(t)=t\,\frac{f(t)-1}{t-1}\qquad t\ne 1
\]
and
\[
(1-\lambda) (\T{f^*})^*(t)=(1-\lambda)t(\T{f^*})(t^{-1})=\frac{f^*(t^{-1})-1}{t^{-1}-1}= \frac{f(t)-t}{1-t}
\]
from which the assertion follows. Consequently, if $ f $ is an extreme point in $ \mathcal P_0 $ then
$ \T{f}=f $ or
\[
\frac{t}{\lambda}\cdot\frac{f(t)-1}{t-1}=f(t)\qquad t>0
\]
from which it follows that
\[
f(t)=\frac{t}{\lambda+(1-\lambda)t}\qquad t>0.
\]
 Finally, the two functions we left out may also be written in this way. Indeed, the constant function $ t\to 1 $ appears in the formula by setting $ \lambda=0 $ while the identity function $ t\to t $ appears by setting $ \lambda=1. $
\end{proof}

\begin{theorem}\label{theorem: formula for oper mon func with measure on [0,1]}
Let $ f $ be a positive operator monotone function defined in the positive half-line.
There is a bounded positive measure $ \mu $ on the closed interval $ [0,1] $ such that
\[
f(t)=\int_0^1 \frac{t}{\lambda+(1-\lambda)t}\,d\mu(\lambda)\qquad t>0.
\]
Conversely, any function given on this form is operator monotone. The measure $ \mu $ is a probability measure if and only if $ f(1)=1. $
\end{theorem}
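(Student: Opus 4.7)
The plan is to deduce the integral representation from Choquet's theorem applied to the compact convex set $\mathcal P_0$, whose extreme points have been identified in Lemma~\ref{formula for the extreme points} as (a subset of) the one-parameter family $f_\lambda(t)=t/(\lambda+(1-\lambda)t)$, $\lambda\in[0,1]$. First I would reduce the general positive case to $\mathcal P_0$: the case $f\equiv 0$ is trivial, and otherwise $f(1)>0$, so $f/f(1)\in\mathcal P_0$; rescaling the probability measure obtained there by the factor $f(1)$ will produce the desired bounded positive measure.

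The key technical step is to verify that $\mathcal P_0$ is metrizable in the topology of point-wise convergence, so that Choquet's theorem applies. Each $f\in\mathcal P_0$ is continuous and bounded above by $t+1$ by Lemma~\ref{bound for positive operator monotone function}, hence is determined by its values on any countable dense subset $D\subset(0,\infty)$; consequently, point-wise convergence on $\mathcal P_0$ coincides with point-wise convergence on $D$. Together with the compactness already noted after Lemma~\ref{bound for positive operator monotone function}, this makes $\mathcal P_0$ a compact convex metrizable set in a locally convex space. The continuous injection $\Phi\colon[0,1]\to\mathcal P_0$ given by $\lambda\mapsto f_\lambda$ is then a homeomorphism onto its closed image, and Lemma~\ref{formula for the extreme points} places the extreme boundary inside this image. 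Choquet's theorem provides, for each $f\in\mathcal P_0$, a probability measure whose barycenter is $f$; pushing it back through $\Phi$ produces a probability measure $\mu$ on $[0,1]$, and evaluating the continuous linear functional $g\mapsto g(t)$ yields
\[
f(t)=\int_0^1\frac{t}{\lambda+(1-\lambda)t}\,d\mu(\lambda).
\]

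The converse direction is routine: every $f_\lambda$ is itself operator monotone (it lies in $\mathcal P_0$), and any positive-measure integral of the $f_\lambda$ is a point-wise limit of non-negative linear combinations, hence again operator monotone. The last assertion is immediate by evaluating at $t=1$, giving $f(1)=\mu([0,1])$. The main obstacle I expect is the clean invocation of Choquet's theorem: one must both confirm metrizability of $\mathcal P_0$ and be careful that Lemma~\ref{formula for the extreme points} only asserts that extreme points \emph{have} the form $f_\lambda$, not that every $f_\lambda$ is extreme. The latter causes no difficulty, since the representing measure may be supported on the full compact set $\Phi([0,1])$ rather than on the (possibly smaller) extreme boundary.
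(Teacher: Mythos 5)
Your proof is correct, and the converse and normalization parts match the paper, but the main existence step takes a genuinely different route. The paper does not invoke Choquet's theorem: it applies Krein--Milman to write $f\in\mathcal P_0$ as a point-wise limit of a \emph{net} of finite convex combinations of extreme points, i.e.\ of functions $\int_0^1 f_\lambda\,d\mu_j(\lambda)$ with $\mu_j$ discrete probability measures on $[0,1]$, and then uses compactness of the set of probability measures on $[0,1]$ in the weak topology to extract an accumulation measure $\mu$; since $\lambda\mapsto f_\lambda(t)$ is continuous on $[0,1]$ for each fixed $t$, the representation passes to the limit. Your route applies Choquet's theorem directly, which requires the metrizability of $\mathcal P_0$ that you correctly establish (members of $\mathcal P_0$ are continuous and uniformly bounded on compacts, so the compact topology of point-wise convergence is already induced by the countably many evaluations on a dense set $D$). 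What Choquet buys is a representing measure in one step, with no nets and no separate limiting argument; what the paper's argument buys is independence from Choquet theory --- only Krein--Milman and weak compactness of $P([0,1])$ are needed, and no metrizability discussion. You are also right, and in fact more explicit than the paper, on two delicate points: Lemma~\ref{formula for the extreme points} only constrains the extreme points to lie in the image of $\Phi$, and it suffices that the representing measure live on that compact image; and the unnormalized case is handled by scaling by $f(1)$.

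One small item you take for granted (as does the paper in its converse direction): that every $f_\lambda$, $0\le\lambda\le1$, actually belongs to $\mathcal P_0$. This needs a one-line verification, e.g.\ $f_\lambda(t)=\bigl((1-\lambda)+\lambda t^{-1}\bigr)^{-1}$ is the composition of the operator monotone decreasing maps $t\mapsto \lambda t^{-1}+(1-\lambda)$ and $s\mapsto s^{-1}$, hence operator monotone increasing. Without this, neither the claim that $\Phi$ maps into $\mathcal P_0$ nor the converse assertion of the theorem is justified.
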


\begin{proof}
We noticed that $ \mathcal P_0 $ is convex and compact in the topology of point-wise convergence of functions.
Therefore, by Krein-Milman's theorem, it is generated by its extreme points $ \mathit{Ext}(\mathcal P_0) $ in the sense that $ \mathcal P_0 $ is the closure
\[
\mathcal P_0=\mathit{\overline{conv}(Ext}(\mathcal P_0))
\]
of the convex hull of $ \mathit{Ext}(\mathcal P_0). $ By Lemma~\ref{formula for the extreme points} the convex hull of $ \mathit{Ext}(\mathcal P_0) $ consists of functions of the form
\begin{equation}\label{formula with a discrete measure}
f(t)=\int_0^1 \frac{t}{\lambda+(1-\lambda)t}\,d\mu(\lambda)\qquad t>0,
\end{equation}
where $ \mu $ is a discrete probability measure on $ [0,1]. $  A function $ f $ in $ \mathcal P_0 $
is therefore the limit of a net of functions $ (f_j)_{j\in J} $ written on the form (\ref{formula with a discrete measure}) in terms of discrete probability measures $ (\mu_j)_{j\in J}. $ Since the set of probability measures on $ [0,1] $ is compact in the weak topology there exists an accumulation measure $ \mu $ such that $ f $ is expressed as in the statement of the theorem. 
\end{proof}

Notice that a possible atom in zero of the measure $ \mu $ in the above theorem contributes with the constant term $ \mu\{0\}  $ in the integral. A possible atom in $ 1 $ contributes with the term $ \mu\{1\} t. $

A brief outline of the theory presented in Theorem~\ref{reduction to positive functions}, Lemma~\ref{formula for the extreme points}  and Theorem~\ref{theorem: formula for oper mon func with measure on [0,1]}  was given in the authors' PhD thesis~\cite[Page 12-13]{kn:hansen:1983:2}.

It is illuminating to consider the linear mapping
\[
\Lambda(f)(t)=\left\{\begin{array}{ll}
                                            \displaystyle t\frac{f(t)-f(1)}{t-1}\qquad &t\ne 1\\[2ex]
                                            f'(1)                                                             &t=1
                                            \end{array}\right.
\]
defined for differentiable functions $ f\colon(0,\infty)\to\mathbf R. $ It is closely related to the non-linear transformation $ T $ introduced in Theorem~\ref{reduction to positive functions}. Indeed, 
\[
\Lambda(f)=f'(1) \T{f}\qquad\text{for}\quad f\in\mathcal P_0
\]
and $ \Lambda $ is thus a transformation of $ \mathcal P. $
However, we cannot replace $ T $ by $ \Lambda $ in the proof of Lemma~\ref{formula for the extreme points} since $ \Lambda $ does not map $ \mathcal P_0 $ into itself, and we cannot alternatively work directly with $ \mathcal P $ since $ \mathcal P $ is not compact.

\begin{theorem}
The measure $ \mu $ appearing in Theorem~\ref{theorem: formula for oper mon func with measure on [0,1]} is uniquely defined by the operator monotone function $ f\in\mathcal P. $
\end{theorem}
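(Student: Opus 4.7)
The plan is to reduce the uniqueness claim to the uniqueness of the Hausdorff moment problem on $[0,1]$. Suppose two bounded positive measures $\mu_1, \mu_2$ on $[0,1]$ yield the same $f$, and set $\mu=\mu_1-\mu_2$; this is a finite signed Borel measure on $[0,1]$ with
\[
\int_0^1 \frac{t}{\lambda+(1-\lambda)t}\,d\mu(\lambda)=0\qquad\text{for every }t>0.
\]
I want to show $\mu=0$.

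The first step is a change of parameter that exposes polynomial structure. Dividing numerator and denominator by $t$, the kernel becomes
\[
\frac{t}{\lambda+(1-\lambda)t}=\frac{1}{1+c\lambda},\qquad c=\frac{1}{t}-1.
\]
As $t$ runs over $(0,\infty)$, $c$ runs over $(-1,\infty)$, so the hypothesis is equivalent to
\[
\int_0^1 \frac{d\mu(\lambda)}{1+c\lambda}=0\qquad\text{for every }c>-1.
\]

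Next, for $c\in(-1,1)$ the geometric series $(1+c\lambda)^{-1}=\sum_{k=0}^\infty(-c\lambda)^k$ converges uniformly in $\lambda\in[0,1]$, so term-by-term integration against the finite signed measure $\mu$ (justified by uniform convergence and finiteness of $|\mu|$) gives
\[
\sum_{k=0}^\infty(-c)^k m_k=0\qquad\text{for all }c\in(-1,1),
\]
where $m_k=\int_0^1\lambda^k\,d\mu(\lambda)$. A convergent power series that vanishes identically on an interval has all coefficients equal to zero, so $m_k=0$ for every $k\ge 0$. By the Weierstrass approximation theorem polynomials are uniformly dense in $C[0,1]$, so $\int P\,d\mu=0$ for every continuous $P$ on $[0,1]$, and the Riesz representation theorem yields $\mu=0$, i.e.\ $\mu_1=\mu_2$.

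There is no real obstacle in this argument; the only point requiring a moment of care is the interchange of sum and integral, which is routine because the geometric series converges uniformly on $[0,1]$ whenever $|c|<1$ and $|\mu|$ is finite. It is worth noting that we use only the values of $f(t)$ for $t$ in a right-neighbourhood of $1$ (equivalently, $c$ near $0$), which is in line with the fact that $f$ is in principle analytic off the negative half-line; but the proof above avoids any appeal to analytic function theory and is thus consistent with the elementary spirit of the paper.
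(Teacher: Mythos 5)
Your argument is correct. The paper takes a related but genuinely different route: it introduces the linear map $\Lambda(f)(t)=t\,\bigl(f(t)-f(1)\bigr)/(t-1)$ and observes that the extreme kernels $e_\lambda(t)=t/(\lambda+(1-\lambda)t)$ are eigenfunctions, $\Lambda(e_\lambda)=\lambda e_\lambda$, whence $p(\Lambda)(f)=\int_0^1 e_\lambda\,p(\lambda)\,d\mu(\lambda)$ for every polynomial $p$; since $e_\lambda(1)=1$, evaluating at $t=1$ produces all moments $\int p\,d\mu$ directly from $f$, and Weierstrass approximation then recovers $\mu$. You instead extract the same moments by the substitution $c=t^{-1}-1$, which turns the kernel into $(1+c\lambda)^{-1}$, and by a geometric series expansion valid uniformly for $\lambda\in[0,1]$ when $|c|<1$; the vanishing of the resulting power series on an interval forces all moments of $\mu_1-\mu_2$ to vanish, and Weierstrass together with Riesz representation finishes the proof. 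Both arguments ultimately reduce uniqueness to the determinacy of the Hausdorff moment problem on $[0,1]$; yours is arguably more elementary and self-contained, the only analytic point being the interchange of sum and integral, which you justify correctly, while the paper's version is slicker and ties in with the transformation $f\to\T{f}$ of Theorem~\ref{reduction to positive functions} by exhibiting the eigenfunction structure of the extreme kernels. Your closing observation that only the behaviour of $f$ near $t=1$ is used is consistent with the paper's device as well, since $p(\Lambda)(f)(1)$ likewise depends only on iterated difference quotients of $f$ at $1$.
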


\begin{proof}
The action of $ \Lambda $ on functions in $ \mathcal P $ is calculated by noticing that
\[
\Lambda\left(\frac{t}{\lambda+(1-\lambda)t}\right)=\frac{\lambda t}{\lambda+(1-\lambda)t}
\]
and thus
\[
p(\Lambda)\left(\frac{t}{\lambda+(1-\lambda)t}\right)= \frac{p(\lambda) t}{\lambda+(1-\lambda)t}
\]
for any polynomial $ p. $ For a function $ f\in\mathcal P $ we thus have
\[
p(\Lambda)(f)=\int_0^1 \frac{t}{\lambda+(1-\lambda)t}\, p(\lambda)\,d\mu(\lambda),
\]
where $ \mu $ is the representing measure for $ f. $ This identity recovers the measure $ \mu $ from $ f $ by using Weierstrauss's polynomial approximation theorem.
\end{proof}

\section{Other integral representations}

\begin{corollary}\label{integral formula for positive operator monotone functions}
Let $ f $ be a positive operator monotone function defined in the positive half-line.
There is a bounded positive measure $ \mu $ on the closed extended half-line $ [0,\infty] $ such that
\[
f(t)=\int_0^\infty \frac{t(1+\lambda)}{t+\lambda}\, d\mu(\lambda)\qquad t>0.
\]
Conversely, any function given on this form is operator monotone. The measure $ \mu $ is a probability measure if and only if $ f(1)=1. $
\end{corollary}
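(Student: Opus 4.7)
The plan is to obtain this representation from Theorem~\ref{theorem: formula for oper mon func with measure on [0,1]} by a homeomorphic change of variables on the parameter interval. I would look for a substitution that carries the kernel $t/(\lambda+(1-\lambda)t)$ on $\lambda\in[0,1]$ into the kernel $t(1+s)/(t+s)$ on $s\in[0,\infty]$, push forward the existing representing measure under this substitution, and check the endpoint/atom behaviour.

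Concretely, setting $\lambda=s/(1+s)$ gives $1-\lambda=1/(1+s)$ and $s=\lambda/(1-\lambda)$. Then
\[
\lambda+(1-\lambda)t=\frac{s+t}{1+s},\qquad\text{so that}\qquad \frac{t}{\lambda+(1-\lambda)t}=\frac{t(1+s)}{s+t}.
\]
The map $\lambda\mapsto s$ is a homeomorphism of $[0,1]$ onto the compactified half-line $[0,\infty]$, sending $0$ to $0$ and $1$ to $\infty$. Let $\mu$ be the push-forward under this map of the measure $\nu$ produced by Theorem~\ref{theorem: formula for oper mon func with measure on [0,1]}; this immediately yields the integral formula in the corollary, where the kernel at $s=\infty$ is understood as the continuous extension $\lim_{s\to\infty} t(1+s)/(s+t)=t$. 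An atom of $\nu$ at $\lambda=0$ (respectively $\lambda=1$) transfers to an atom of $\mu$ at $s=0$ (respectively $s=\infty$) and contributes the constant term $\mu\{0\}$ (respectively the linear term $\mu\{\infty\}\cdot t$). Setting $t=1$, the kernel equals $1$ uniformly in $s$, so $f(1)=\mu([0,\infty])$, and $\mu$ is a probability measure precisely when $f(1)=1$.

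For the converse direction, given any bounded positive measure $\mu$ on $[0,\infty]$, the inverse substitution $\lambda=s/(1+s)$ pulls the representation back to one of the form considered in Theorem~\ref{theorem: formula for oper mon func with measure on [0,1]}, from which operator monotonicity of $f$ follows. There is no substantive obstacle: all the real analysis — compactness of $\mathcal P_0$, identification of the extreme points, and the application of Krein-Milman — is already packaged inside Theorem~\ref{theorem: formula for oper mon func with measure on [0,1]}. The only point requiring mild care is to handle the atom at infinity correctly, which is precisely the reason one must work on the compactified half-line $[0,\infty]$ rather than on $[0,\infty)$.
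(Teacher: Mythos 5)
Your proposal is correct and follows essentially the same route as the paper: the substitution $s=\lambda/(1-\lambda)$ (the paper's $\alpha=\lambda(1-\lambda)^{-1}$) mapping $[0,1]$ homeomorphically onto $[0,\infty]$, the kernel identity $t/(\lambda+(1-\lambda)t)=t(1+s)/(t+s)$ valid at the endpoints, and the push-forward of the representing measure from Theorem~\ref{theorem: formula for oper mon func with measure on [0,1]}. Your additional remarks on the atoms at the endpoints and the normalization at $t=1$ are consistent with the paper's subsequent discussion.
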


\begin{proof}
The assertion follows from the previous theorem by applying the transformation 
\[
\lambda\to\alpha=\lambda(1-\lambda)^{-1}
\]
which maps
the closed interval $ [0,1] $ onto the closed extended half-line $ [0,\infty], $ and by noticing the identity
\[
\frac{t}{\lambda+(1-\lambda)t}= \frac{t(1-\lambda)^{-1}}{\lambda(1-\lambda)^{-1}+t}=\frac{t(1+\alpha)}{t+\alpha}
\]
which is valid also in the end points of the two intervals.
\end{proof}

We are finally able to give an integral formula for the  operator monotone functions defined in the positive half-line. There are various ways of doing so, but the following formula establishes the connection between operator monotone functions and the theory of Pick functions \cite{kn:donoghue:1974} in complex analysis.

\begin{theorem}\label{formula for operator monotone function}
Let $ f\colon(0,\infty)\to\mathbf R $ be an operator monotone function. There exists a positive measure $ \nu $ on the closed positive half-line $ [0,\infty) $ with $ \int (1+\lambda^2)^{-1}\, d\nu(\lambda)<\infty $ such that
\[
f(t)=\alpha t+\beta+\int_0^\infty\left(\frac{\lambda}{1+\lambda^2} - \frac{1}{t+\lambda}\right)\, d\nu(\lambda)\qquad t>0,
\]
where $ \alpha\ge 0 $ and $ \beta\in\mathbf R. $ Conversely, any function given on this form is operator monotone.
\end{theorem}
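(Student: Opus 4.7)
The plan is to reduce to the positive case already handled in Corollary~\ref{integral formula for positive operator monotone functions}. If $f$ is constant the representation is trivial with $\alpha=0$, $\nu=0$, and $\beta=f(1)$. Otherwise, Theorem~\ref{reduction to positive functions} writes
\[
f(t) = f(1) + f'(1)\,\frac{t-1}{t}\,(\T{f})(t), \qquad t>0,
\]
where $\T{f}\in\mathcal{P}_0$ is positive and operator monotone, so Corollary~\ref{integral formula for positive operator monotone functions} supplies a bounded positive measure $\mu$ on $[0,\infty]$ with
\[
(\T{f})(t)=\int_{[0,\infty]}\frac{t(1+\lambda)}{t+\lambda}\,d\mu(\lambda).
\]

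The heart of the argument is the algebraic identity
\[
\frac{t-1}{t}\cdot\frac{t(1+\lambda)}{t+\lambda}=\frac{1-\lambda^2}{1+\lambda^2}+(1+\lambda)^2\left(\frac{\lambda}{1+\lambda^2}-\frac{1}{t+\lambda}\right),
\]
valid for $\lambda\in[0,\infty)$, which is obtained from the decomposition $\frac{(t-1)(1+\lambda)}{t+\lambda}=(1+\lambda)-\frac{(1+\lambda)^2}{t+\lambda}$ by adding and subtracting the regulariser $\lambda(1+\lambda)^2/(1+\lambda^2)$. The limit of the left-hand side as $\lambda\to\infty$ equals $t-1$, which pins down the contribution of a possible atom of $\mu$ at infinity. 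Splitting off that atom and setting $\alpha=f'(1)\,\mu\{\infty\}$ and $d\nu(\lambda)=f'(1)(1+\lambda)^2\,d\mu(\lambda)$ on $[0,\infty)$, then choosing $\beta$ to absorb the constant terms, gives the claimed representation. The integrability $\int(1+\lambda^2)^{-1}\,d\nu(\lambda)<\infty$ follows from the elementary bound $(1+\lambda)^2/(1+\lambda^2)\leq 2$ and the finiteness of $\mu$.

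For the converse, $t\mapsto\alpha t+\beta$ with $\alpha\geq 0$ is obviously operator monotone, and for each $\lambda\geq 0$ the function $t\mapsto -1/(t+\lambda)$ is operator monotone on $(0,\infty)$. Rewriting the integrand as $(\lambda t-1)/[(1+\lambda^2)(t+\lambda)]$ shows that on any compact subinterval of $(0,\infty)$ it is dominated by a constant multiple of $(1+\lambda^2)^{-1}$, so dominated convergence presents the integral as a pointwise limit of non-negative linear combinations of operator monotone functions, hence itself operator monotone. The main obstacle I expect is unearthing the exact decomposition displayed above and handling the bookkeeping of the atom of $\mu$ at infinity, which converts into the coefficient $\alpha$; once these are in place the remaining verifications are mechanical.
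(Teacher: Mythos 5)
Your proposal follows the paper's own proof essentially step for step: the reduction via Theorem~\ref{reduction to positive functions} and Corollary~\ref{integral formula for positive operator monotone functions}, the identical algebraic identity, the splitting of the atom at infinity into $\alpha$, and the same definitions of $\nu$ and $\beta$ with the bound $(1+\lambda)^2/(1+\lambda^2)\le 2$. The only addition is your explicit domination argument for the converse, which the paper leaves unstated and which is correct.
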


\begin{proof}
We first use Theorem~\ref{reduction to positive functions} to write $ f $ on the form
\[
f(t)=f(1)+f'(1)\frac{t-1}{t} (\T f)(t)\qquad t>0,
\]
where $ \T f  $ is a positive and normalized operator monotone function. We can then apply Corollary~\ref{integral formula for positive operator monotone functions} to obtain a probability measure $ \mu $ on the closed extended half-line $ [0,\infty] $ such that
\[
f(t)=f(1)+f'(1)\frac{t-1}{t}\int_0^\infty \frac{t(1+\lambda)}{t+\lambda}\, d\mu(\lambda)\qquad t>0.
\]
We explicitly remove a possible atom in $ \infty $ to obtain
\[
f(t)=f(1)+f'(1)\mu(\{\infty\}) (t-1)+ f'(1)\int_0^\infty \frac{(t-1)(1+\lambda)}{t+\lambda}\, d\tilde\mu(\lambda),
\]
where $ \tilde\mu $ is a positive finite measure on the closed half-line $ [0,\infty). $ We then make use of the identity
\[
 \frac{(t-1)(1+\lambda)}{t+\lambda}=(1+\lambda)^2\left(\frac{\lambda}{1+\lambda^2}-\frac{1}{t+\lambda}\right)
 +\frac{1-\lambda^2}{1+\lambda^2}
\]
to obtain
\[
f(t)=\alpha t+\beta+ f'(1)\int_0^\infty (1+\lambda)^2\left(\frac{\lambda}{1+\lambda^2}-\frac{1}{t+\lambda}\right)\, d\tilde\mu(\lambda),
\]
where $ \alpha=f'(1)\mu(\{\infty\})\ge 0 $ and 
\[
\beta=f(1)-\mu(\{\infty\})f'(1)+f'(1)\int_0^\infty \frac{1-\lambda^2}{1+\lambda^2}\, d\tilde\mu(\lambda)
\]
is finite since the integrand is bounded between $ -1 $ and $ 1. $ The assertion now follows by setting $ d\nu(\lambda)=f'(1)(1+\lambda)^2\, d\tilde\mu(\lambda) $ and noticing that
\[
1\le (1+\lambda)^2/(1+\lambda^2)\le 2
\]
for $ 0\le\lambda<\infty. $
\end{proof}

\begin{remark}
The unicity of the representing measure $ \mu $ in Theorem~\ref{theorem: formula for oper mon func with measure on [0,1]} readily implies unicity of the representing measures in Corollary~\ref{integral formula for positive operator monotone functions} and Theorem~\ref{formula for operator monotone function}.
\end{remark}

\subsection{Löwner's theorem}

We learn from the integral expression in the previous theorem that an operator monotone function $ f $ defined in the positive half-line can be continued to an analytic function defined in $ \mathbf C\backslash (-\infty,0]. $ Since the imaginary part
\[
\Im\left(-\frac{1}{z+\lambda}\right)=\frac{\Im z}{|z+\lambda|^2}
\]
we also learn that the analytic continuation of $ f $ to the complex upper half-plane has non-negative imaginary part. In fact, the imaginary part of the continuation is positive if  $ f $ is not  constant.

\begin{theorem}[Löwner]
Let $ f:I\to \mathbf R $ be a function defined in an open interval which is either finite $ I=(a,b) $ or infinite of the form $ (a,\infty). $ Then $ f $ is operator monotone if and only if it allows an analytic continuation to the upper half-plane with non-negative imaginary part.
\end{theorem}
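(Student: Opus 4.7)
The plan is to reduce both implications to the positive half-line case, which is essentially contained in Theorem~\ref{formula for operator monotone function} together with the observations preceding that theorem statement. The two reduction maps are the translation $t\mapsto t-a$ when $I=(a,\infty)$, and the Möbius transformation $\phi(t)=(t-a)/(b-t)$ when $I=(a,b)$. The inverse Möbius map is $\phi^{-1}(s)=b-(b-a)/(1+s)$, and since $s\mapsto -(b-a)/(1+s)$ is a positive multiple of a shifted inverse it is operator monotone on $(0,\infty)$; hence $\phi^{-1}$ is an operator monotone bijection from $(0,\infty)$ onto $(a,b)$. A direct computation yields $\Im\phi(z)=(b-a)\Im z/|b-z|^2$, so $\phi$ restricts to a bijection of the open upper half-plane onto itself.

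For the ``only if'' direction, assume $f$ is operator monotone on $I$. If $I=(a,\infty)$, then $g(s)=f(a+s)$ is operator monotone on $(0,\infty)$, so Theorem~\ref{formula for operator monotone function} expresses it as an integral from which one reads off an analytic extension to $\mathbf C\setminus(-\infty,0]$ with non-negative imaginary part in the upper half-plane; composing with the translation $z\mapsto z-a$ gives the desired extension of $f$. If $I=(a,b)$, then $g=f\circ\phi^{-1}$ is a composition of two operator monotone functions, hence operator monotone on $(0,\infty)$; again $g$ extends analytically to the upper half-plane with non-negative imaginary part, and composing with $\phi$ (which preserves the upper half-plane and is analytic there) yields the analytic continuation of $f$ with the required sign.

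For the ``if'' direction, the same reductions show it suffices to prove the following: an analytic function on the upper half-plane that is real on $(0,\infty)$ and has non-negative imaginary part is operator monotone on $(0,\infty)$. Here I would invoke the classical Herglotz-Nevanlinna representation of Pick functions, which expresses any such function as $\alpha z+\beta+\int_{\mathbf R}(1/(\lambda-z)-\lambda/(1+\lambda^2))\,d\nu(\lambda)$ for some $\alpha\ge 0$, $\beta\in\mathbf R$ and positive Borel measure $\nu$ with $\int(1+\lambda^2)^{-1}d\nu<\infty$. The hypothesis that $f$ is real on $(0,\infty)$, together with the Schwarz reflection principle, forces $\nu$ to be supported on $(-\infty,0]$; the substitution $\lambda=-\mu$ then recasts the formula as precisely the integral representation of Theorem~\ref{formula for operator monotone function}, whose converse part yields operator monotonicity on $(0,\infty)$, and thence on $I$ after unwinding the reduction.

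The principal obstacle is the ``if'' direction, which relies on the Herglotz-Nevanlinna representation of Pick functions — a piece of classical complex analysis not developed inside this paper. The reduction arguments and the ``only if'' direction are essentially bookkeeping given the structural theorems already established, the only non-trivial input being the operator monotonicity of $\phi^{-1}$ and the verification that $\phi$ preserves the upper half-plane.
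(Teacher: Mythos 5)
Your proposal is correct and follows essentially the same route as the paper: reduce to the positive half-line via operator monotone M\"obius/affine bijections that also preserve the upper half-plane, read off the analytic continuation from Theorem~\ref{formula for operator monotone function} for one direction, and invoke the classical Nevanlinna--Herglotz representation of Pick functions for the other. The only cosmetic differences are that the paper routes the finite-interval case through $(0,1)$ with $h(t)=t/(t+1)$ rather than mapping $(a,b)$ directly onto $(0,\infty)$, and that it delegates the Pick-function input to \cite{kn:donoghue:1974} where you spell it out.
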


\begin{proof}
The case where $ I $ is the positive half-line follows from Theorem~\ref{formula for operator monotone function} and from the Theory of Pick functions \cite{kn:donoghue:1974}, and the case $ I=(a,\infty) $ then follows by a simple translation. The remaining cases may be similarly reduced to the case $ I=(0,1). $
The function,
\[
h(t)=\frac{t}{t+1}\qquad t>0,
\]
is a bijection between $ (0,\infty) $ and the interval $ (0,1). $ It is operator monotone, and the inverse function,
\[
h^{-1}(t)=\frac{t}{1-t}=\frac{1}{t^{-1}-1}\qquad 0<t<1,
\]
is also operator monotone. Both functions have analytic continuations which map the complex upper half-plane into itself. Composition with $ h $ therefore establishes a bijection between the operator monotone functions defined in the two intervals $ (0,\infty) $ and  $ (0,1). $ It also establishes a bijection between the functions defined in each of the two intervals, that allow an analytic continuation into the complex upper half-plane with non-negative imaginary part.
\end{proof}

\subsection{The representing measure}

\begin{theorem}\label{calculation of the associated measure}
Let $ f\colon(0,\infty)\to\mathbf R $ be an operator monotone function, and let $ \nu $ be the representing measure as given in Theorem~\ref{formula for operator monotone function}. Let $ \tilde\nu $ be the measure obtained from $ \nu $ by removing a possible atom in zero. Then
\[
\lim_{\varepsilon\to 0}\frac{1}{\pi}\int_0^\infty \Im f(-t+i\varepsilon) g(t)\, dt=\frac{g(0)}{2}\nu(\{0\})+\int_0^\infty g(\lambda)\, d\tilde\nu(\lambda)
\]
for every continuous, bounded and integrable function $ g $ defined in $ [0,\infty). $
\end{theorem}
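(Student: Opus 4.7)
The plan is to substitute the integral representation of Theorem~\ref{formula for operator monotone function} into $f(-t+i\varepsilon)$, take imaginary parts to isolate the Cauchy--Poisson kernel $\varepsilon/((\lambda-t)^2+\varepsilon^2)$, and then perform Stieltjes-type inversion. The factor $1/2$ on $\nu\{0\}$ will arise because $\lambda=0$ sits on the boundary of the integration region $(0,\infty)$ for the variable $t$.

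A direct calculation gives
\[
\Im f(-t+i\varepsilon)=\alpha\varepsilon+\int_0^\infty\frac{\varepsilon}{(\lambda-t)^2+\varepsilon^2}\,d\nu(\lambda),
\]
since $\alpha(-t)+\beta$ and $\lambda/(1+\lambda^2)$ contribute only to the real part. Multiplying by $g(t)/\pi$, integrating over $t>0$, splitting $g=g_+-g_-$ and applying Tonelli to interchange the $t$- and $\lambda$-integrations, the $\alpha\varepsilon$-term contributes $\alpha\varepsilon\,\pi^{-1}\int g\to 0$ and the remainder becomes
\[
\int_0^\infty I(\lambda,\varepsilon)\,d\nu(\lambda),\qquad I(\lambda,\varepsilon):=\frac{1}{\pi}\int_0^\infty\frac{\varepsilon\,g(t)}{(\lambda-t)^2+\varepsilon^2}\,dt,
\]
which is precisely the Cauchy--Poisson integral at height $\varepsilon$ of $g$ extended by zero to the negative axis, evaluated at the point $\lambda$.

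Because the Poisson kernel is an approximate identity, for each $\lambda>0$, a point of continuity of the extended $g$, one has $I(\lambda,\varepsilon)\to g(\lambda)$, and this will yield the bulk contribution $\int_0^\infty g(\lambda)\,d\tilde\nu(\lambda)$. At the endpoint $\lambda=0$ the substitution $t=\varepsilon s$ produces
\[
I(0,\varepsilon)=\frac{1}{\pi}\int_0^\infty\frac{g(\varepsilon s)}{1+s^2}\,ds\longrightarrow g(0)\cdot\frac{1}{\pi}\int_0^\infty\frac{ds}{1+s^2}=\frac{g(0)}{2},
\]
which is the source of the coefficient $1/2$ attached to $\nu\{0\}$.

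The hard part will be justifying the exchange of $\lim_{\varepsilon\to 0}$ with $\int\cdot\,d\nu$ when $\nu$ is not finite. The uniform bound $I(\lambda,\varepsilon)\le\|g\|_\infty$, combined with the fact that $\int(1+\lambda^2)^{-1}\,d\nu<\infty$ forces $\nu([0,R])\le(1+R^2)\int(1+\lambda^2)^{-1}\,d\nu<\infty$, handles the integral over any bounded interval $[0,R]$ by dominated convergence. For the tail $\lambda>R$ I would reapply Fubini to rewrite
\[
\int_R^\infty I(\lambda,\varepsilon)\,d\nu(\lambda)=\frac{1}{\pi}\int_0^\infty g(t)\int_R^\infty\frac{\varepsilon\,d\nu(\lambda)}{(\lambda-t)^2+\varepsilon^2}\,dt
\]
and exploit the inequality $(\lambda-t)^2\ge(\lambda/2)^2$ valid for $\lambda\ge 2t$, together with the weighted finiteness $\int(1+\lambda^2)^{-1}\,d\nu<\infty$, to bound the inner $\lambda$-integral by $O(\varepsilon)$ uniformly for $t\le R/2$; the complementary region $t>R/2$ is then controlled by the $L^1$-integrability of $g$. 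Combining this tail estimate with the bounded-interval dominated convergence and the pointwise limits of the previous step closes the argument.
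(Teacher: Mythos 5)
Your main computation follows the paper's proof exactly: insert the integral representation, observe that $\Im f(-t+i\varepsilon)=\alpha\varepsilon+\int_0^\infty \varepsilon\bigl((\lambda-t)^2+\varepsilon^2\bigr)^{-1}d\nu(\lambda)$, interchange the $t$- and $\lambda$-integrations, and use that the Poisson kernel is an approximate identity, with the half-mass $g(0)/2$ at the boundary point $\lambda=0$. That part is correct and is precisely what the paper does; the paper then simply invokes ``Lebesgue's convergence theorem'' and stops.

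Where you go beyond the paper is in trying to justify passing the limit $\varepsilon\to 0$ through $\int\cdot\,d\nu$ when $\nu$ is infinite, and there your argument does not close. The bounded-interval part is fine ($\nu([0,R])<\infty$ plus the uniform bound $|I(\lambda,\varepsilon)|\le\|g\|_\infty$ gives dominated convergence), and the tail estimate for $t\le R/2$, $\lambda\ge R$ is genuinely $O(\varepsilon)$ as you say. But the complementary region $t>R/2$ is \emph{not} controlled by the $L^1$-integrability of $g$ alone: for $\lambda>R$ the Poisson kernel concentrates near $t=\lambda$, so the inner $t$-integral over $t>R/2$ is only bounded by $\pi\sup_{t\ge\lambda/2}|g(t)|$, and integrating that against $d\nu$ over $(R,\infty)$ can diverge, since $\nu$ may have infinite total mass and a bounded, continuous, integrable $g$ need not tend to zero at infinity (think of narrow unit-height spikes at the integers, paired with $\nu=\sum_n\delta_n$, for which even the right-hand side $\int g\,d\tilde\nu$ fails to converge absolutely). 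To close this you need an extra hypothesis (e.g.\ $g(t)\to 0$ as $t\to\infty$, or $g$ compactly supported, or $(1+t^2)g(t)$ integrable), or an argument via positivity and Fatou when $g\ge 0$. To be fair, the paper's own proof is silent on this point, so your attempt at least identifies the real difficulty even though the proposed tail bound does not resolve it.
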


\begin{proof}
By applying Theorem~\ref{formula for operator monotone function} we obtain
\[
\begin{array}{rl}
I_\varepsilon&=\displaystyle\frac{1}{\pi}\int_0^\infty \Im f(-t+i\varepsilon) g(t)\, dt\\[3ex]
&=\displaystyle\frac{1}{\pi}\int_0^\infty\left(\varepsilon\alpha + \int_0^\infty\frac{\varepsilon}{(\lambda-t)^2+\varepsilon^2}\, d\nu(\lambda) \right) g(t)\, dt. 
\end{array}
\]
By Fubini's theorem we may then write
\[
I_\varepsilon=\frac{\varepsilon\alpha}{\pi} \int_0^\infty g(t)\, dt +\frac{1}{\pi}\int_0^\infty\int_0^\infty\frac{\varepsilon}{(\lambda-t)^2+\varepsilon^2} g(t)\, dt\, d\nu(\lambda).
\]
Since 
\[
\frac{1}{\pi}\int_{-\infty}^\infty\frac{\varepsilon}{(\lambda-t)^2+\varepsilon^2}\, dt =1,
\]
we obtain by Lebesgue's convergence theorem
\[
\lim_{\varepsilon\to 0}\frac{1}{\pi}\int_0^\infty\frac{\varepsilon}{(\lambda-t)^2+\varepsilon^2} g(t)\, dt=g(\lambda)\qquad\text{for}\quad\lambda>0.
\]
For $ \lambda=0 $ we only obtain $ g(0)/2\,. $ 
\end{proof}

\begin{acknowledgement*}
The author is indebted to the referees for a number of useful suggestions.
\end{acknowledgement*}

{\small


\vfill

      \noindent Frank Hansen: Institute for International Education, Tohoku University, Japan. Email: frank.hansen@m.tohoku.ac.jp.
      }

\end{document}